% Time-stamp: <bifixesSturm.tex 12 Jan 2011 18h12m37>
%======================================================
\documentclass{article}
\usepackage[dvipsnames]{xcolor}
\usepackage{gastex}
\usepackage{amsmath}
\usepackage{amssymb}
\usepackage{upref}
\usepackage{multirow}
\usepackage{multicol}
\usepackage{url}
%%%%%%%%%%%%% packages VD %%%%%%%%%%
\usepackage{graphicx}
\usepackage{tikz}
\usepackage{subfigure}
\usepackage{graphicx}

%========== Numbering lines================================
%\usepackage{lineno}
%    \linenumbers
%\usepackage[notcite]{showkeys}
%========== Index =========================================
\usepackage{makeidx}
\makeindex
%========== Theorems ======================================
\usepackage{theorem}
\newtheorem{theorem}{Theorem}
\newtheorem{lemma}[theorem]{Lemma}
\newtheorem{proposition}[theorem]{Proposition}
\newtheorem{corollary}[theorem]{Corollary}
\newtheorem{definition}[theorem]{Definition}
{\theorembodyfont{\rmfamily}%
  \newtheorem{example}[theorem]{Example}
  \newtheorem{remark}[theorem]{Remark} }
\newenvironment{proof}{\noindent\textit{Proof.}}
{\QED\vskip\theorempostskipamount} 
\newenvironment{proofof}[1]{\noindent\textit{Proof
    \protect{#1}.}}
                       {\QED\vskip\theorempostskipamount}
\def\petitcarre{\vrule height4pt width 4pt depth0pt}
\def\QED{\relax\ifmmode\eqno{\hbox{\petitcarre}}\else{%
  \unskip\nobreak\hfil\penalty50\hskip2em\hbox{}\nobreak\hfil
  \petitcarre
  \parfillskip=0pt \finalhyphendemerits=0\par\smallskip}
  \fi}
%========== Maths ====================
%\def\u(#1){\underline{#1\!}\,}

\newcommand\RR{\mathcal{R}}
\newcommand\CR{\mathcal{CR}}

\newcommand{\N}{\mathbb{N}}

\newcommand{\R}{\mathbb{R}}
\newcommand{\CC}{\mathbb{C}}
\newcommand\MR{\mathcal{MR}}

\def\un(#1){\underline{#1}\,}
\DeclareMathOperator{\Card}{Card}

\DeclareMathOperator{\Fact}{Fact}

\definecolor{ivoire}{rgb}{0.99,0.99,0.8}
%%%%%%%%%%% macros VD %%%%%%%%%%

\def\FFF{\mathcal{F}}

\def\smc{symmetric basis }
\def\epsilon{\varepsilon}
%%%%%%%%%%%%%%%%%%%%%%%%%%%%%%%%%%
%========== Time calculation ==============================
\usepackage{calc}
\newcounter{hours}\newcounter{minutes}
\newcommand\computetime{\setcounter{hours}{\time/60}%
  \setcounter{minutes}{\time-\value{hours}*60}%
  \thehours\,h\,\theminutes}
\newcommand\dateandtime{\today\quad\computetime}
%========== Hyperref at the end ========================
\usepackage[hypertex,hyperindex,pagebackref,final]{hyperref}
%========== Equations and figures by sections after hyperref
\numberwithin{theorem}{section}
\numberwithin{equation}{section}
\numberwithin{figure}{section}
\numberwithin{table}{section}
%========== Title and Authors
\title{Return words   of linear involutions and fundamental groups }
\author{Val\'erie Berth\'e$^1$, Vincent Delecroix$^2$,
Francesco Dolce$^3$, \\
Dominique Perrin$^3$,
Christophe  Reutenauer$^4$,
Giuseppina Rindone$^3$\\\\
$^1$CNRS, Universit\'e Paris 7,
$^2$CNRS, Universit\'e de Bordeaux,\\
$^3$Universit\'e Paris Est, $^4$Universit\'e du Qu\'ebec \`a Montr\'eal}
\date{\dateandtime}
%========== Document starts here
\begin{document}
%========================
%========== Lists ==================================
\makeatletter
\def\@listI{%
  \leftmargin\leftmargini
  \setlength{\parsep}{0pt plus 1pt minus 1pt}
  \setlength{\topsep}{2pt plus 1pt minus 1pt}
  \setlength{\itemsep}{0pt}
}
\let\@listi\@listI
\@listi
\def\@listii {%
  \leftmargin\leftmarginii
  \labelwidth\leftmarginii
  \advance\labelwidth-\labelsep
  \setlength{\topsep}{0pt plus 1pt minus 1pt}
}
\def\@listiii{%
  \leftmargin\leftmarginiii
  \labelwidth\leftmarginiii
  \advance\labelwidth-\labelsep
  \setlength{\topsep}{0pt plus 1pt minus 1pt}
%              \topsep    0\p@ \@plus\p@\@minus\p@
  \setlength{\parsep}{0pt} 
  \setlength{\partopsep}{1pt plus 0pt minus 1pt}
}
\makeatother
%====================

\newcommand{\comment}[1]{ \begin{center} {\fbox{\begin{minipage}[h]{0.9
            \linewidth} {\sf #1}  \end{minipage} }} \end{center}} 
\maketitle
%========================

\begin{abstract}
We investigate the natural codings of linear involutions. We deduce
from the  geometric
representation  of linear involutions   as Poincar\'e maps of measured foliations
   a suitable  definition
of return words which  yields  that  the set of first return
words to a given word is a symmetric basis of the free group on the underlying
alphabet $A$.    The set of   first return words    with respect to   a subgroup of
finite index $G$ of the free group on $A$ is  also  proved to be a symmetric basis of $G$.
 \end{abstract}
%\tableofcontents
%%%%%%%%%%%%%%%%%%%%%

\section{Introduction}
A linear involution is an injective   piecewise isometry defined on a  pair of intervals.  This   generalization of   the notion of  interval
exchange
allows one   to work with  nonorientable foliations on nonorientable surfaces.
Linear involutions  were introduced  by  Danthony and Nogueira
in~\cite{DanthonyNogueira1990} and~\cite{DanthonyNogueira1988},  generalizing  
interval exchanges  with flip(s) \cite{Nogueira1989,NogueiraPiresTroubetzkoy2013} (these are interval exchange transformations which reverse orientation 
 in at least one interval). 
They  extended  to these transformations the notion
of Rauzy induction (introduced in~\cite{Rauzy1979}). The study of linear involutions
was later developed by Boissy and Lanneau in~\cite{BoissyLanneau2009}.
Note that there exist   various  generalizations of interval exchanges:    let us  quote, e.g.,   pseudogroups   of isometries \cite{GaboriauLevittPaulin1995} and    interval identification systems \cite{Skripchenko2012}.

In the present paper, we  study natural codings of linear
involutions  in the spirit of our previous papers on Sturmian sets
\cite{BerstelDeFelicePerrinReutenauerRindone2012} 
and their generalizations as tree sets \cite{BertheDeFeliceDolceLeroyPerrinReutenauerRindone2013c,BertheDeFeliceDolceLeroyPerrinReutenauerRindone2013b,BertheDeFeliceDolceLeroyPerrinReutenauerRindone2013m,BertheDeFeliceDolceLeroyPerrinReutenauerRindone2013a}.
A tree set  is  a  factorial set of words   that  all satisfy a combinatorial 
condition   expressed in terms of the possible extensions  of these  words within  the   tree set: 
 the condition is  that  the extension graph of each word is a tree, with this graph describing  the possible extensions of a word in the language on the left and on the right. 
Tree sets encompass  the languages of   classical shifts of zero entropy like the ones generated by  Sturmian words, Arnoux-Rauzy words,  or else  natural codings of interval exchanges. Note however  that these shifts display various  behaviors in terms  of spectral properties (they can  be weakly mixing, or  they can have pure discrete spectrum).

Tree sets  have particularly interesting  properties  relating    free groups,  symbolic dynamics and bifix codes. In particular tree sets allow  one to  exhibit  bases
of the free group,  or  of subgroups  of the free group.  Indeed,   in a uniformly recurrent tree set, the sets of first return words  to a given word are bases of the free group on the alphabet
 \cite{BertheDeFeliceDolceLeroyPerrinReutenauerRindone2013a}. Moreover, maximal  bifix codes  that are included in
uniformly recurrent  tree sets provide bases of subgroups of finite index  of the free group  \cite{BertheDeFeliceDolceLeroyPerrinReutenauerRindone2013b}.
 Tree sets are also proved to be  closed under maximal bifix decoding and under decoding with respect to
 return words \cite{BertheDeFeliceDolceLeroyPerrinReutenauerRindone2013m}.

 All these properties thus hold  for  regular interval exchange sets. Observe that the fact   that  first return words are bases of the free group  can either be deduced  combinatorially  from the property that  interval exchanges yield
tree sets  \cite{BertheDeFeliceDolceLeroyPerrinReutenauerRindone2013c} or else, as we will show here,  from the  geometric interpretation  of interval exchanges as Poincar\'e sections of linear
flows on translation surfaces:  for any   
word $w$ of  the associated  language,  the set of return words  
to  $w$    provides  a   basis of the fundamental group  of the associated surface.

The natural coding of a linear involution  is the set  of factors  of  the  infinite words that  encode the sequences of subintervals met by the orbits of the transformation.
They are defined  on an
alphabet $A$  whose letters and their inverses
index the intervals exchanged by the involution.    A natural coding  is thus a subset of the free group $F_A$ on the alphabet $A$.  An important property of this set is 
its stability by taking inverses.

 We extend to natural codings of linear involutions most of the properties
proved for uniformly recurrent tree sets, and thus, for   natural codings of interval exchanges.  The  extension is not   completely immediate.  If linear involutions   have a geometric interpretation  as Poincar\'e maps of measured foliations,
%see Section \ref{sectionGeometric} for more details), 
one has   to modify the definition
of return words in order to  make it consistent with  the notion of Poincar\'e  map  of  a foliation. We  thus consider return words
 to the set $\{w, w^{-1}\}$ and we consider a truncated version of them, that we call
 mixed first  return words.   We also have  to replace the basis of a subgroup by
its symmetric version containing the inverses of its elements,
called a symmetric  basis.  The free group is then    obtained as the fundamental group
of   a compact surface  in which a finite number of points   are removed, and linear involutions are  seen  as 
 Poincar\'e sections of  measured foliations  of  the surface. The   return words to a given word can be seen as different ways of choosing a section.

We prove that if $S$ is the
natural coding of a linear involution $T$ without connection on the alphabet $A$, 
the following holds.
\begin{itemize}
\item The set
of mixed  first return words to a given word in $ S$
is a symmetric basis of the free group on $A$
(Theorem~\ref{theoremReturns}).%or First Return Theorem).
%\item A finite symmetric bifix code $X$ is $S$-maximal if and only if it is a symmetric
%basis of a subgroup of finite index (Theorem~\ref{theoremFiniteIndex} or Finite Index Basis Theorem).
\item  
Let $G$ be a subgroup  of finite index of the free group $F_A$. The set of  prime words in $S$ with respect to $G$
is a symmetric basis of $G$ (Theorem \ref{theoremGroupCode}).  
By prime words in $S$ with respect to $G$, we mean the
nonempty words
in $G \cap S$  without a proper nonempty prefix  in $G \cap S$.
\end{itemize}

Observe that return words play  a  crucial role in   symbolic dynamics. They allow
the characterization of   substitutive words \cite{Durand:98},  they provide spectral information  through eigenvalues
(see, e.g.,  \cite{BS:2014}),  or else, they  yield   so-called $S$-adic representations  \cite{Durand:00,Durand:03}.

Let us stress the fact that   even if the proofs provided here  concerning the   algebraic properties
 of   return words  are of a  topological and geometric flavor,  these properties  hold in a wider combinatorial context
  through 
the  notion  of specular set   and specular  groups, 
 where the present   geometric   background  does a priori
 not exist.  Specular  groups   are natural generalizations of free groups: 
they  are free products of a finite number of copies of  ${\mathbb Z}$  and ${\mathbb  Z}/2{\mathbb Z}$.  
A specular set is a subset of  a specular  group which generalizes the
 natural codings of linear involutions. More precisely, we consider an alphabet  with an involution $\theta$ acting on $A$,  possibly with  some fixed points,
 and the group $G_{\theta}$  generated by $A$ with the  relations $a \theta(a)=1$  for every letter $a$ in $A$.  We  can thus  consider,  in this extended framework,
 reduced words, symmetric sets of words  as  well as laminary sets.  In the case 
 where $\theta$ has no fixed point, we recover the free group.
 A specular set is then  defined  as    a laminary set  such that  the extension graph of any nonempty word is a tree and the extension graph of the  
 empty word has two connected components which are trees. 
Extensions of  Theorem \ref{theoremReturns} and \ref{theoremGroupCode}  are proved to  hold in this  context in  \cite{Specular2015}.
%;   special sets also allow  a characterization of the symmetric bases of subgroups of finite index of
 %specular groups contained in a specular set $S$: those are      the finite $S$-maximal symmetric
% bifix codes contained in $S$ \cite{Specular2015}.   

\bigskip

This paper is organized as follows.   In Section~\ref{sectionPreliminaries}, we recall notions concerning
words, free groups and graphs. Linear involutions are defined in 
Section~\ref{sectionInvolutions}.
 We    also recall that, by a result of \cite{BoissyLanneau2009}, a  nonorientable linear involution without connection
is minimal.  In  Section~\ref{sectionGeometric}, we  provide the  necessary  geometric background 
on   natural involutions.   We focus on the    symbolical properties  of their natural codings   in Section~\ref{sec:returnFund}
and we   introduce  the notion of  return word we will work with, as well   as even  letters and the   even group.
%In Section~\ref{sectionNatural},  we introduce the natural coding of a linear involution.
  The geometric  and topological proofs of the main results
on return words for  natural codings of linear involutions are given in Section \ref{sec:return}.

\paragraph{Ackowledgement} This work was supported by grants from
Region Ile-de-France (project  DIM RDM-IdF),    and by the ANR projects Dyna3S ANR-13-BS02-0003  and Eqinocs ANR-11-BS02-004.
We warmly thank the referees for their    valuable  comments.
%, and  the FARB Project
%``Aspetti algebrici e computazionali nella teoria dei codici,
%degli automi e dei linguaggi formali'' (University of Salerno, 2013).
%and the MIUR PRIN 2010-2011 grant
%``Automata and Formal Languages: Mathematical and Applicative Aspects''.

%The authors thank the referee for his suggestions which helped
%to improve the presentation of our paper.

%%%%%%%%%%%%%%%%%%%%%%
\section{Words, free groups and laminary sets}\label{sectionPreliminaries}
In this section, we introduce notions concerning  sets of words and free groups.

%\subsection{Recurrent and uniformly recurrent sets}
Let $A$ be a finite nonempty alphabet and let $A^*$ be  the set of all words on $A$.
We  let $1$ or  $\varepsilon$ denote  the empty word.
A set of words is said to be \emph{factorial} if it contains the
factors of its elements.

The notation
$a^{-1}$ will be interpreted  
as an inverse in the free group   $F_A$ on $A$. We also use
the notation $\bar{a}$ instead of $a^{-1}$. 
%Let $A^{-1}=\{a^{-1}\mid a\in A\}$
%be a copy of $A$. The map $a\mapsto a^{-1}$ is extended to an involution
%on $A\cup A^{-1}$ by defining $(a^{-1})^{-1}=a$. The notation
%$a^{-1}$ will be interpreted  
%as an inverse in the free group   $F_A$ on $A$. We also use
%the notation $\bar{a}$ instead of $a^{-1}$.
%Recall that  the  elements of $F_A$  are identified
%with the reduced words on the alphabet $A\cup A^{-1}$, with 
%a word on $A\cup A^{-1}$ being  \emph{reduced} if it
%has
%no factor of the form $aa^{-1}$ for $a\in A\cup A^{-1}$. Note that the set
%of reduced words is factorial. For any word $w$ on $A\cup A^{-1}$ there
%is a unique reduced word equivalent to $w$ obtained by erasing
%he factors $aa^{-1}$
%or $a\in A\cup A^{-1}$. If $u$ is the reduced word equivalent to $w$, we say that
%$w$ \emph{reduces} to $u$. If $w=a_1a_2\cdots a_n$ with $a_i\in A\cup A^{-1}$
%is reduced, the inverse of $w$ is the word $w^{-1}=a_n^{-1}\cdots a_2^{-1}a_1^{-1}$.
%All words considered below,
%unless
%stated explicitly, are supposed to be on the alphabet $A\cup A^{-1}$.

A set of reduced words on the alphabet $A\cup A^{-1}$ is said to be
\emph{symmetric} if it contains the inverses of its elements.
Let  $X^*$  be the submonoid 
of $(A\cup A^{-1})^*$ generated
by $X$ without reducing the products. If $X$ is symmetric, the subgroup  of $F_A$  generated by   $X$
is the set obtained by reducing the words of $X^*$.

\begin{definition}[Symmetric basis] If $X$ is a basis of a subgroup $H$ of $F_A$, the set $X\cup X^{-1}$
is  called a \emph{symmetric basis}
of $H$. 
\end{definition}
In particular, $A\cup A^{-1}$ is a symmetric  basis of $F_A$. Note that
a symmetric  basis $X\cup X^{-1} $  is not a basis of $H$ but that any $w\in H$
can be written uniquely $w=x_1x_2\cdots x_n$ with $x_i\in X\cup X^{-1}$
and $x_ix_{i+1}$ not equivalent to $1$ for $1\le i\le n-1$.  We recall that, by \emph{Scheier's Formula}, any basis of a subgroup
of index $d$ of a free group on $k$ symbols has $d(k-1)+1$ elements.
Hence, if $Y$ is a symmetric  basis
of a subgroup of index $d$ in a free group on $k$ symbols, then $\Card(Y)=2d(k-1)+2$.

\bigskip

The following definition follows~\cite{HilionCoulboisLustig2008}
and~\cite{LopezNarbel2013}.
\begin{definition}[Laminary set]\label{def:laminary}
A symmetric factorial set of reduced words on the alphabet $A\cup A^{-1}$
is called
a \emph{laminary set} on $A$.
\end{definition}

A  laminary set $S$ is called \emph{semi-recurrent} if for any $u,w\in S$,
there is a $v\in S$ such that $uvw\in S$ or $uvw^{-1}\in S$. Likewise,
it is said to be \emph{uniformly semi-recurrent} if it
is right extendable and
 if for any word $u\in S$ there is an integer $n\ge 1$ such that
for any word $w$ of length $n$ in $S$, $u$ or $u^{-1}$ is a factor of $w$.
A uniformly semi-recurrent set is semi-recurrent.

Following again the terminology of~\cite{HilionCoulboisLustig2008}, we say
that a laminary set $S$ is \emph{orientable} if there exist two factorial
sets $S_+,S_-$ such that $S=S_+\cup S_-$ with $S_+\cap S_-=\{\varepsilon\}$
and for any $x\in S$, one has $x\in S_-$ if and only if $x^{-1}\in S_+$.
Note that if $S$ is a semi-recurrent orientable laminary set, then the sets $S_+,S_-$ as above
are unique (up to their interchange). The sets $S_+,S_-$ are called the
\emph{components} of $S$.
Moreover a uniformly recurrent and orientable laminary set is
a union of two uniformly recurrent sets.
Indeed,  $S_+$ and $S_-$ are uniformly recurrent.

%%%%%%%%%%%%%%%%%%%
\section{Linear involutions}\label{sectionInvolutions}
In this section, we define linear involutions, which are a generalization
of interval exchange transformations. We first give the basic definitions
including generalized permutation and length data, and
then  discuss minimality for  involutions  in relation with  the notion of connection.
%%%%%
\subsection{Definition}
Let $A$ be an alphabet with $k$ elements. 

We consider two copies $I\times \{0\}$ and
$I\times \{1\}$ of an open interval $I$ of the real line and  we define
 $\hat{I}=I\times \{0,1\}$.
We call the sets $I\times \{0\}$ and $I\times \{1\}$ the two
\emph{components} of $\hat{I}$. We consider each component as an open interval.

A \emph{generalized permutation} on $A$ of type $(\ell,m)$, with $\ell+m=2k$,
  is a bijection $\pi:\{1,2,\ldots,2k\}\rightarrow A\cup
A^{-1}$.
We represent it by a two line array
\begin{displaymath}
\pi=\begin{pmatrix} \pi(1)\ \pi(2)\ \ldots \pi(\ell)\\
\pi(\ell +1)\ \ldots \pi(\ell+m)
\end{pmatrix}\ .
\end{displaymath}
A \emph{length data} associated with $(\ell,m,\pi)$ is a nonnegative
vector $\lambda\in \R_+^{A\cup A^{-1}}=\R_+^{2k}$ such that
\begin{displaymath}
\lambda_{\pi(1)}+\ldots+\lambda_{\pi(\ell)}=
\lambda_{\pi(\ell+1)}+\ldots+\lambda_{\pi(2k)}
\text{ and }\lambda_a=\lambda_{a^{-1}}\text{ for all }a\in A.
\end{displaymath}
We consider a partition of $I\times \{0\}$ (minus $\ell-1$
points) into $\ell$ open intervals
$I_{\pi(1)},\ldots,I_{\pi(\ell)}$ of lengths $\lambda_{\pi(1)},\ldots,\lambda_{\pi(\ell)}$
and a partition of $I\times \{1\}$ (minus $m-1$ points) into $m$ open intervals
$I_{\pi(\ell+1)},\ldots,I_{\pi(\ell+m)}$ of lengths $\lambda_{\pi(\ell+1)},\ldots,\lambda_{\pi(\ell+m)}$. Let $\Sigma$ be the set of $2k-2$ \emph{division points} separating
the intervals $I_a$ for $a\in A\cup A^{-1}$.

The \emph{linear involution} on $I$ relative to these data is the
map $T=\sigma_2\circ\sigma_1$ defined on the set
$\hat{I}\setminus\Sigma$, formed of $\hat{I}$ minus $2k-2$ points,
and
which is
 the composition
of two involutions defined as follows. 
\begin{enumerate}
\item[(i)]The first involution $\sigma_1$ is defined on $\hat{I}\setminus\Sigma$.
It is such that for each $a\in A\cup A^{-1}$, its restriction to $I_a$
is either a translation or a symmetry from $I_a$ onto $I_{a^{-1}}$.
 Thus, there are real
numbers
$\alpha_a$ such that for any $(x,\delta)\in I_a$,
one has $\sigma_1(x,\delta)=(x+\alpha_a,\gamma)$ in the first case,
and $\sigma_1(x,\delta)=(-x+\alpha_a,\gamma)$ in the second case
(with $\gamma\in\{0,1\}$).
\item[(ii)]The second involution exchanges the two components of
  $\hat{I}$.
It  is defined for $(x,\delta)\in \hat{I}$
by $\sigma_2(x,\delta)=(x,1-\delta)$. The image of $z$ by $\sigma_2$
is called the \emph{mirror image} of $z$.
\end{enumerate}
We also say that $T$ is a {\em linear involution on $I$  relative to
 the alphabet $A$}
or that it is a {\em $k$-linear involution} to express the fact
that the alphabet $A$ has $k$ elements.

\begin{example}\label{exampleLinear}
Let $A=\{a,b,c,d\}$ and
\begin{displaymath}
\pi=\begin{pmatrix}a&b&a^{-1}&c\\c^{-1}&d^{-1}&b^{-1}&d
\end{pmatrix} \ .
\end{displaymath}
Let $T$ be the $4$-linear involution corresponding to the length data
represented in Figure~\ref{figureLinear}. We represent $I\times\{0\}$
above $I\times \{1\}$ with the assumption that the restriction
of $\sigma_1$ to $I_a$ and $I_d$ is a symmetry while its restriction
to $I_b,I_c$ is a translation.
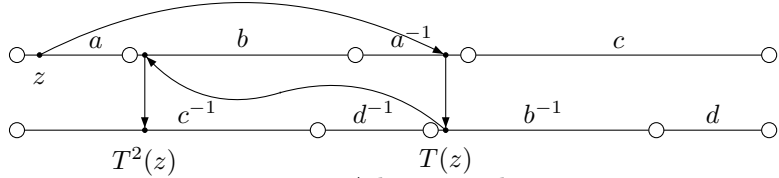
\begin{figure}[hbt]
\centering
\gasset{Nadjust=wh,AHnb=0}
\begin{picture}(100,17)(0,-1)
%--interval nodes--
\node(h0)(0,10){}\node(h1)(15,10){}\node(h2)(45,10){}\node(h3)(60,10){}\node(h4)(100,10){}
\node(b0)(0,0){}\node(b1)(40,0){}\node(b2)(55,0){}\node(b3)(85,0){}\node(b4)(100,0){}
\gasset{Nh=.1,Nw=.1,Nadjust=n}
%iteration nodes
\node[ExtNL=y,Nh=.6,Nw=.6,Nfill=y,NLangle=-90,NLdist=2](z)(3,10){$z$}\node[Nh=.6,Nw=.6,Nfill=y](sz)(57,10){}\node[ExtNL=y,Nh=.6,Nw=.6,Nfill=y,NLangle=-90,NLdist=2](Tz)(57,0){$T(z)$}
\node[Nh=.6,Nw=.6,Nfill=y](sTz)(17,10){}\node[ExtNL=y,Nh=.6,Nw=.6,Nfill=y,NLangle=-90,NLdist=2](TTz)(17,0){$T^2(z)$}
\node(m)(35,5){}
%interval lines
\drawedge[ELpos=70](h0,h1){$a$}\drawedge(h1,h2){$b$}\drawedge(h2,h3){$a^{-1}$}\drawedge(h3,h4){$c$}
\drawedge[ELpos=60](b0,b1){$c^{-1}$}\drawedge(b1,b2){$d^{-1}$}\drawedge(b2,b3){$b^{-1}$}\drawedge(b3,b4){$d$}
%iteration edges
\gasset{AHnb=1}
\drawedge[AHnb=1,curvedepth=7](z,sz){}\drawedge[AHnb=1](sz,Tz){}
\drawedge[curvedepth=-3,AHnb=0](Tz,m){}\drawedge[curvedepth=3](m,sTz){}\drawedge(sTz,TTz){}
\end{picture}
\caption{A linear involution.}\label{figureLinear}
\end{figure}
We indicate on the figure the effect of the transformation $T$ on a point
$z$ located in the left part of the interval $I_a$. The point
$\sigma_1(z)$ is located in the right part of $I_{a^{-1}}$,  and the point
$T(z)=\sigma_2\sigma_1(z)$ is just below on the left of $I_{b^{-1}}$.
 Next, the point $\sigma_1T(z)$ is located on the left part of $I_b$
and the point $T^2(z)$ just below.
\end{example}

Thus the notion of linear involution is an extension of the notion 
of  interval exchange transformation in the following sense.
Assume that $\ell=k$, that
$A=\{\pi(1),\ldots,$ $\pi(k)\}$,  and that the restriction of $\sigma_1$
to each subinterval is a translation. Then, the restriction of $T$
to $I\times \{0\}$ is an interval exchange (and so is its restriction to 
$I\times \{1\}$ which is the inverse of the first one). Thus,
in this case, $T$ is a pair of mutually inverse interval exchange transformations.

It is also an extension of the notion of interval exchange with flip(s)
\cite{Nogueira1989,NogueiraPiresTroubetzkoy2013}. Assume again  that $\ell=k$, that
$A=\{\pi(1),\ldots,$ $\pi(k)\}$, but now that the restriction of $\sigma_1$
to at least one  subinterval is a symmetry. Then the restriction of $T$
to $I\times \{0\}$ is an interval exchange  with flip(s).

Note that we consider in this paper interval exchange transformations
defined by a partition of an open interval  minus
$\ell-1$ points in $\ell$
 open intervals. The usual notion of interval exchange transformation
uses a partition of a semi-interval in a finite number of semi-intervals.
One recovers the usual notion of interval exchange transformation
 on a semi-interval
 by attaching to each open interval its left endpoint.

A linear involution $T$ is a bijection from $\hat{I}\setminus\Sigma$
onto $\hat{I}\setminus\sigma_2(\Sigma)$.
Since $\sigma_1,\sigma_2$ are involutions and $T=\sigma_2\circ\sigma_1$,
 the inverse of $T$
is $T^{-1}=\sigma_1\circ\sigma_2$.

The set $\Sigma$ of division points is also the set of singular points
of $T$ and their mirror images are the singular points of $T^{-1}$.
Note that these singular points $z$ may be `false' singularities, in the sense
that $T$ can have a continuous extension to an open neighborhood of $z$.

Two particular cases of linear involutions deserve attention.
\begin{definition}[Nonorientable linear involution]
A linear involution $T$ on the alphabet $A$ 
relative to a generalized permutation $\pi$ of type
$(\ell,m)$ 
is said to be \emph{nonorientable} if there are indices $i,j\le \ell$ such that
$\pi(i)=\pi(j)^{-1}$  (and thus indices $i,j\ge \ell+1$
such that $\pi(i)=\pi(j)^{-1}$).  In other words,  there is  some $a\in A\cup A^{-1}$ for which  $I_a$
and $I_{a^{-1}}$ belong to  the same component  of $\hat{I}$. Otherwise
$T$ is said to be \emph{orientable}.  \end{definition}

\begin{definition}[Coherent linear involution]
A linear involution $T=\sigma_2\circ \sigma_1$ on $I$
relative to the alphabet $A$
is said to be \emph{coherent} if, for each $a\in A\cup A^{-1}$, the restriction
of $\sigma_1$ to $I_a$ is a translation if and only if $I_a$
and $I_{a^{-1}}$ belong to distinct components of $\hat{I}$. 
\end{definition}
\begin{example}
The linear involution of Example~\ref{exampleLinear} is coherent.
%\begin{example}[A noncoherent  linear involution] \label{exampleNonCoherent}
Let us consider now  the linear involution $T$ which is the same as in Example~\ref{exampleLinear}, but such that the restriction of $\sigma_1$ to $I_c$ 
is a symmetry. Thus $T$ is not coherent. We assume that $I=]0,1[$,
that $\lambda_a=\lambda _d$, that $1/4<\lambda_c<1/2$
 and that $\lambda_a+\lambda_b<1/2$.
 Let $z=1/2+\lambda_c$ (see Figure~\ref{figureLinearnonCoherent}).
We have then $T^3(z)=z$, showing that $T$ is not minimal. Indeed,
since $z\in I_c$, we have
$T(z)=1-z=1/2-\lambda_c$. Since
$T(z)\in I_a$ we have $T^2(z)=(\lambda_a+\lambda_b)+(\lambda_a-1+z)=z-\lambda_c=1/2$.
Finally, since $T^2(z)\in I_{d^{-1}}$, we obtain $1-T^3(z)=T^2(z)-\lambda_c=1-z$
and thus $T^3(z)=z$.
\begin{figure}[hbt]
\centering
\gasset{Nadjust=wh,AHnb=0}
\begin{picture}(100,18)(0,-2)
%--interval nodes--
\node(h0)(0,10){}\node(h1)(15,10){}\node(h2)(45,10){}\node(h3)(60,10){}\node(h4)(100,10){}
\node(b0)(0,0){}\node(b1)(40,0){}\node(b2)(55,0){}\node(b3)(85,0){}\node(b4)(100,0){}
\gasset{Nh=.6,Nw=.6,Nfill=y,Nadjust=n,ExtNL=y}
%iteration nodes
\node[NLdist=2](z)(90,10){$z=T^3(z)$}\node[Nh=.1,Nw=.1](m)(50,5){}\node(sz)(10,0){}\node[NLangle=-140,NLdist=2](Tz)(10,10){$T(z)$}\node(sTz)(50,10){}\node[ExtNL=y,NLangle=-90,NLdist=2](TTz)(50,0){$T^2(z)$}\node(sTTz)(90,0){}
%interval lines
\drawedge[ELpos=70](h0,h1){$a$}\drawedge(h1,h2){$b$}\drawedge(h2,h3){$a^{-1}$}\drawedge(h3,h4){$c$}
\drawedge[ELpos=60](b0,b1){$c^{-1}$}\drawedge(b1,b2){$d^{-1}$}\drawedge(b2,b3){$b^{-1}$}\drawedge(b3,b4){$d$}
%iteration edges
\gasset{AHnb=1}
\drawedge[curvedepth=3,AHnb=0](z,m){}\drawedge[curvedepth=-3](m,sz){}
\drawedge(sz,Tz){}\drawedge[curvedepth=4](Tz,sTz){}\drawedge(sTz,TTz){}
\drawedge[curvedepth=-4](TTz,sTTz){}\drawedge(sTTz,z){}
\end{picture}
\caption{A noncoherent linear involution.}\label{figureLinearnonCoherent}
\end{figure}
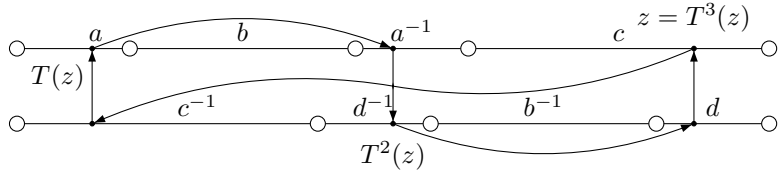
\end{example}

Linear
involutions which are orientable and coherent
correspond to interval exchange transformations, 
whereas  orientable but noncoherent  linear
involutions are interval exchanges with flip(s).

Orientable linear involutions correspond to orientable laminations (see
Section~\ref{sectionGeometric}), whereas 
 coherent
linear involutions correspond to orientable surfaces. Thus coherent
nonorientable involutions correspond to nonorientable laminations
on orientable surfaces.

%%%%%%%%%%%%%%%%%%%%%
\subsection{Minimality}\label{subsec:minimal}
 We first  recall the notion of  connection  and then 
prove 
 that involutions without connection are essentially
always minimal.

%We say that a transformation $T$ defined on a topological space
%$X$ is \emph{minimal} if the
%nonnegative orbit
%$P(z)=\cup_{n\ge 0}T^n(z)$ of any point  $z\in X$
%is dense in $X$. 

\begin{definition}[Connection]
A \emph{connection} of a linear involution $T$ is a triple $(x,y,n)$ 
where $x$ is a singularity of $T^{-1}$, $y$ is a singularity of $T$, $n\geq 0$  and $T^n x = y$. 
\end{definition}
%\comment{We call $n$ the length of the connection.}

Let $T$ be a  linear involution without connection. Let 
\begin{equation}
O=\bigcup_{n\ge 0}T^{-n}(\Sigma) \quad \text{and }\quad \hat{O}=O\cup \sigma_2(O)\label{eqO}
\end{equation}
be respectively the negative orbit of the singular points and its
closure under mirror image.
Then $T$ is a bijection from $\hat{I}\setminus \hat{O}$ onto itself.
Indeed, assume that $T(z)\in \hat{O}$. If $T(z)\in O$ then
$z\in \hat{O}$. Next if $T(z)\in \sigma_2(O)$, then
$T(z)\in \sigma_2(T^{-n}(\Sigma))=T^n(\sigma_2(\Sigma))$ for some $n\ge 0$. We cannot
have $n=0$ since $\sigma_2(\Sigma)$ is not in the image of $T$.
Thus $z\in T^{n-1}(\sigma_2(\Sigma))=\sigma_2(T^{-n+1}(\Sigma))\subset
\sigma_2(O)$. Therefore in both cases $z\in \hat{O}$. The converse
implication
is proved in the same way. Note  that $\hat{I}\setminus\hat{O}$ is dense in $\hat{I}$, 
and the   nonnegative orbit of any  point of  $\hat{I}\setminus\hat{O}$   is   well-defined.  

 \begin{definition}[Minimality]
 A linear involution $T$ 
on $I$ without connection
is  minimal if for any point $z\in\hat{I}\setminus\hat{O}$ 
the nonnegative orbit of $z$  is dense in $\hat{I}$. 
\end{definition}

Note that when a linear involution is  orientable, that is,  when it is a pair
of interval exchange transformations (with or without flips),   the interval exchange transformations
can be minimal although the linear involution is not since each component
of $\hat{I}$ is stable by the action of $T$. 
Moreover, it is shown in~\cite{DanthonyNogueira1990} that noncoherent linear 
involutions are almost surely not minimal.

Let $X \subset I \times \{0,1\}$. The \emph{return time} $\rho_X$ to $X$ is the function from $I \times \{0,1\}$ to $\N \cup \{\infty\}$ defined on $X$  by
\[
\rho_X(x) = \inf \{n \geq 1 \mid T^n(x) \in X\}.
\]

The following result is proved in~\cite{BoissyLanneau2009}
(Proposition 4.2) for the class of coherent involutions.
The proof uses Keane's theorem proving that an
interval exchange transformation without connection is minimal~\cite{Keane1975}. The proof of Keane's theorem also implies that for each interval of positive length, the return
time to this interval is bounded.
\begin{proposition}\label{propBL}
Let $T$ be a linear involution without connection on $I$. If $T$
is nonorientable, it is minimal. Otherwise, its restriction to each component
of $\hat{I}$ is minimal. Moreover, for each interval of positive length
included in $\hat{I}$, the return time to this interval takes
a finite number of values.
\end{proposition}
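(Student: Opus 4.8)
The plan is to reduce the statement for a general linear involution without connection to the already-known results for interval exchange transformations, using the orientable double cover / orientation of the involution. First I would treat the two cases separately. In the orientable case there is nothing genuinely new: by definition $T$ restricted to $I\times\{0\}$ is an interval exchange transformation (possibly with flips) on $\ell=k$ intervals, and the hypothesis that $T$ has no connection translates exactly into the statement that this interval exchange has no connection in the classical sense; Keane's theorem \cite{Keane1975} then gives minimality of the restriction to $I\times\{0\}$, and symmetrically to $I\times\{1\}$. The bound on return times is likewise the classical fact recalled just before the statement: the proof of Keane's theorem shows that for an interval exchange without connection the return time to any interval of positive length is bounded, hence takes finitely many values.

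For the nonorientable case I would follow the argument of Boissy--Lanneau \cite[Proposition 4.2]{BoissyLanneau2009}, checking that coherence is never used there, only the absence of connection together with nonorientability. The key point is that nonorientability forces the two components of $\hat I$ to be ``mixed'' by $T$: since some $I_a$ and $I_{a^{-1}}$ lie in the same component, for a typical point $z$ the orbit $(T^n z)_{n\ge 0}$ visits both $I\times\{0\}$ and $I\times\{1\}$. One then argues that the closure of such an orbit is a closed $T$-invariant set with nonempty interior whose complement is also $T$-invariant and, by the no-connection hypothesis, cannot break $\hat I$ into nontrivial invariant pieces; a minimality argument à la Keane (looking at the family of intervals obtained by iterating Rauzy induction / cutting along the negative orbit $\hat O$ of the singularities, whose lengths tend to $0$ because there is no connection) yields density of every orbit in all of $\hat I$. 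I would want to phrase this via the Rauzy induction for linear involutions of \cite{DanthonyNogueira1990,BoissyLanneau2009}: no connection guarantees the induction can be iterated forever and the lengths of the induced intervals go to zero, which is exactly what is needed both for minimality and for the uniform bound on return times.

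The main obstacle, and the only place where care is really needed, is making sure that the Boissy--Lanneau proof genuinely goes through without the coherence assumption: coherence is the natural hypothesis under which the suspension is an orientable surface, and one must verify that their combinatorial/dynamical minimality argument (as opposed to any statement about the geometry of the suspension surface) uses only the generalized permutation being irreducible-in-the-no-connection sense and nonorientable. I expect this to be routine but it is the heart of the matter; once minimality is in hand, the finiteness of the set of return-time values follows from the standard Keane-type estimate, applied on each component in the orientable case and on all of $\hat I$ in the nonorientable case, since an interval of positive length contains one of the finitely many ``level-$n$'' intervals of the induction for $n$ large enough, and the return time to the latter is bounded by a quantity depending only on $n$.
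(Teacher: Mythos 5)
There is a genuine gap, and it occurs at the same point in both of your cases: you invoke Keane's theorem for an interval exchange transformation \emph{possibly with flips}. Keane's theorem only applies to orientation-preserving interval exchanges; when some restriction of $\sigma_1$ is a symmetry (which happens precisely when $T$ is not coherent, and also, in the nonorientable coherent case, when one passes to the first-return map on a single component), the map you want to apply Keane to is an interval exchange \emph{with} flips, and minimality of such maps under the no-connection hypothesis is exactly what needs to be proved, not a citation. Indeed the paper points out that noncoherent linear involutions are almost surely \emph{not} minimal, so something beyond "no connection $\Rightarrow$ Keane" must be happening. The missing idea is an orientation double cover at the level of the interval: the paper works on $\widetilde{I}=I\times\{0,1\}^2$ with the skew product $\widetilde{T}(x,\delta)=(T(x),\delta)$ or $(T(x),1-\delta)$ according to whether $T$ is locally a translation or a symmetry, and shows that the map $T'$ induced by $\widetilde{T}$ on $I\times\{0,0\}$ is a translation near every recurrent point (an orbit returning to the same sheet has seen an even number of flips). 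Poincar\'e recurrence plus the finitely many cut points coming from preimages of singularities then show $T'$ is an honest flip-free interval exchange without connection, to which Keane's theorem (and its return-time corollary) genuinely applies; minimality and the return-time bound for $T$ are then pulled back, with nonorientability used only at the very end to pass from density in each component to density in all of $\hat I$.

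Your plan for the nonorientable case --- ``follow Boissy--Lanneau and check that coherence is never used'' --- defers precisely this issue. The paper explicitly notes that \cite{BoissyLanneau2009}, Proposition 4.2, is stated for coherent involutions, and it supplies the double-cover argument above as a replacement rather than a routine verification; you acknowledge this step is ``the heart of the matter'' but do not carry it out. The Rauzy-induction route you sketch (lengths of induced intervals tending to zero) could in principle be made to work, but the claims you rely on (infinite iterability of the induction for noncoherent linear involutions, decay of lengths) are themselves nontrivial in this generality and are not established in your proposal. As written, the argument does not close.
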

\begin{proof}
Consider the set $\widetilde{I}=\hat{I}\times\{0,1\}=I\times\{0,1\}^2$ and the transformation
$\widetilde{T}$ on $\widetilde{I}$ defined for $(x,\delta)\in\widetilde{I}$ by
\begin{displaymath}
\widetilde{T}(x,\delta)=\begin{cases}
(T(x),\delta)&\text{if $T$ is a translation on a neighborhood of $x$}\\
(T(x),1-\delta)&\text{otherwise.}
\end{cases}
\end{displaymath}

Let $T'$ be the transformation induced by $\widetilde{T}$ on $I'=I\times\{0,0\}$.
Note that if $x\in I'$ is recurrent, that is, $\widetilde{T}^n(x)\in I'$ for some
$n>0$, then the restriction of $T'$ to some neighborhood of $x$
is a translation. Indeed, there is an even number of indices $i$ with
$0\le i< n$ such that $T$ is a symmetry on a neighborhood of $T^i(x)$.

Let us show that $T'$ is an interval exchange transformation.
Let $\Sigma$ be the set of singularities of $T$.
For each $z\in\Sigma$, let $s(z)$ be the minimal integer $s>0$
(or $\infty$) such that $\widetilde{T}^{-s}(z)\in I'$. Let $N=\{\widetilde{T}^{-s(z)}(z)\mid z\in \Sigma\text{ with } s(z)<\infty\}$. The set $N$ divides $I'$ into  a finite number of disjoint
open intervals. If $J$ is such an open interval, it contains, by the
Poincar\'e Recurrence Theorem, at least one recurrent point $x\in I'$
 for $\widetilde{T}$,
that is such that $\widetilde{T}^n(x)\in I'$ for some $n>0$. By definition of $N$, all the points
of $J$ are recurrent. Moreover, as we have seen above, the restriction of $T'$ to $J$ is a translation. This shows that $T'$ is an interval exchange
transformation.

We can now conclude the proof. Since $T$ has no connection, $T'$ has no connection. Thus, by Keane's theorem, it is minimal. This shows that 
the intersection with $I\times \{0\}$ of the nonnegative
orbit of any point in $I\times \{0\}$ is dense in $I\times \{0\}$. A similar
proof shows that the same is true for $I\times \{1\}$. If $T$
is nonorientable, the nonnegative orbit of any $x\in I\times \{0\}$ contains
a point in $I\times \{1\}$. Thus its nonnegative orbit is dense in $\hat{I}$.
The same holds symmetrically for $x\in I\times\{1\}$.

Let $J$ be an interval of positive length included in $I$.
By Keane's theorem, the return time to $J\times\{0,0\}$ relative to $T'$ takes
a finite number of values. Thus
the return time to $J\times\{0\}$
with respect to $T$ takes also a finite number of values. A similar
argument holds for an interval included in $I\times\{1\}$.
\end{proof}

%%%%%%%%%%%%%%%%%%%%%%%%%
\section{Measured foliations   and linear involutions} \label{sectionGeometric}
In order to study  return words of linear involutions  (this will be the object of Section \ref{sec:returnFund} and \ref{sec:return}),
we first  introduce a  geometric and topological  viewpoint on natural involutions.
The main actors are measured foliations of surfaces introduced by W.P. Thurston~(see \cite{FathiLaudenbachPoenaru1991} for an introduction,  and see also~\cite{Hatcher2002}). They can be  considered
 as two-dimensional extensions of  linear involutions. 
They are defined on a compact surface $X$ in which a finite number of points $\Sigma \subset X$ are removed.
Poincar\'e sections of these measured foliations are then  linear involutions.

 A foliation is a decomposition of a surface as a union of leaves which are 1-dimensional. As an example, the plane $\R^2$ decomposes as a union of vertical lines.
Let $X$ be a (non-necessarily orientable) surface. A \emph{foliation} on $X$ is a covering of $X$ by charts $\phi_i: X_i \rightarrow \R^2$
such that the transitions $\phi_i \circ \phi_j^{-1}: \phi_j(X_i \cap X_j) \rightarrow \phi_i(X_i \cap X_j)$ preserve
vertical lines, in other words they are of the form:
\[
\phi_i \circ \phi_j^{-1}(x,y) = (f_{ij}(x), g_{ij}(x,y))
\]
with $f_{ij}(x)=\pm x + c_{ij}$. In the chart $\phi_j$, each stripe $x = a$ matches up with the stripe $x = f_{ij}(a)$ in $X_i$. Gluing all together
these stripes we obtain a \emph{leaf} of the foliation which is a one-dimensional manifold  immersed  in $X$. Each leaf is hence homeomorphic to the circle $\mathbb{R}/\mathbb{Z}$ or the line $\mathbb{R}$. The surface $X$ decomposes as the union of these leaves.

Given a nonsingular smooth vector field, or more generally a line field, the integral curves of this field provide a foliation.
\begin{example}\label{exampleInvolution3}
Let $T$ be the coherent linear involution on $I=]0,1[$
represented in Figure~\ref{figureLinear3}.
We choose $(3-\sqrt{5})/2$ for the length of the interval $I_c$
(or $I_b$). With this choice, $T$ has no connection.
\begin{figure}[hbt]
\centering
\gasset{AHnb=0,Nadjust=wh}
\begin{picture}(100,20)
\node(h0)(0,10){}\node(b)(23.6,10){}\node(bbar)(61.8,10){}\node(h1)(100,10){}
\node(b0)(0,0){}\node(cbar)(38.2,0){}\node(abar)(76.4,0){}\node(b1)(100,0){}

\drawedge(h0,b){$a$}\drawedge(b,bbar){$b$}\drawedge(bbar,h1){$b^{-1}$}
\drawedge(b0,cbar){$c$}\drawedge(cbar,abar){$c^{-1}$}\drawedge(abar,b1){$a^{-1}$}
\end{picture}
\caption{The linear involution on  a tree-letter alphabet  of Example \ref{exampleInvolution3}.}\label{figureLinear3}
\end{figure}
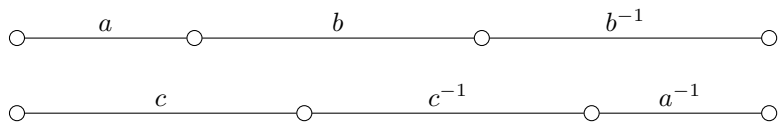

In Figure~\ref{fig:suspension} we show an example of a foliation of a surface related to this linear involution. This surface is built from a polygon where vertices are removed and edges are glued with orientation preserving isometry.
\begin{figure}[hbt]
\begin{center}\includegraphics{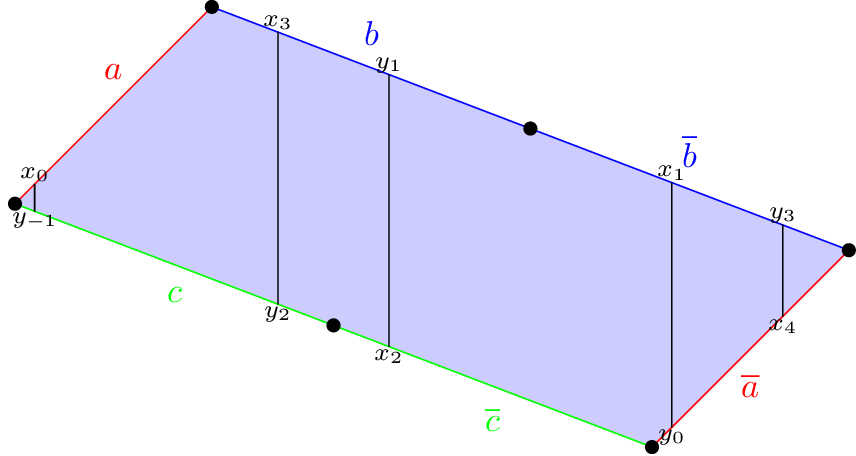} 
%\begin{center}\includegraphics{suspension.pdf} 
\end{center}
\caption{A suspension of a linear involution whose vertical lines naturally form a foliation. The cutting sequence following a leaf is given by the iteration of the linear involution (the notation follows the convention $\sigma_1(y_i) = x_i$ and $\sigma_2(y_i) = x_{i+1}$).}
\label{fig:suspension}
\end{figure}
\end{example}

Let now $X$ be a compact surface.
A \emph{singular foliation} on $X$ is a foliation $\FFF$ defined on $X \backslash \Sigma$ where $\Sigma \subset X$ is a finite set of points and such that in the neighborhood of each point of $\Sigma$ the foliation is homeomorphic to the foliation of the punctured disc in $\CC$ given by the line field $z^p (dz)^2 = I$; in other words, the leaves are the branches of $\gamma_c(t) = (I t + c)^{1/(p/2+1)}$ where $c \in \mathbb{C}$ is a constant (see also Figure~\ref{fig:singular_charts} for a picture). In this foliation there are  $p+2$ singular leaves (which are half-lines that hit $0$) that we call \emph{separatrices}. We say that the singularity of the foliation has \emph{angle} $(p+2) \pi$ or \emph{degree} $p$.

\begin{figure}[!ht]
\begin{center}
\subfigure[degree $p=-1$, angle  $\pi$\label{subfig:LABEL1}]{
\includegraphics{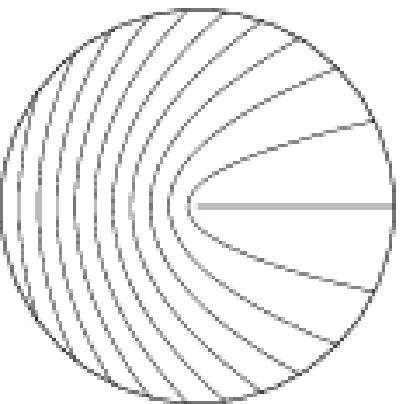}
} \hspace{1cm}
\subfigure[degree $p=1$, angle  $3\pi$\label{subfig:LABEL2}]{
\includegraphics{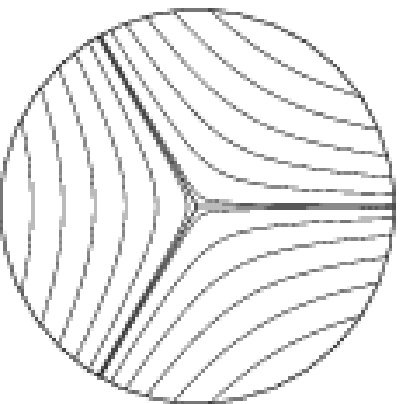}
} \hspace{1cm}
\subfigure[degree $p=2$, angle  $4\pi$\label{subfig:LABEL3}]{
\includegraphics{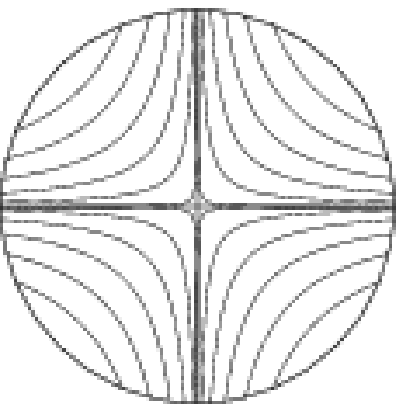}
}
\end{center}
\caption{Chart around points of $\Sigma$.}
\label{fig:singular_charts}
\end{figure}

On the surface obtained from the polygon of Figure~\ref{fig:suspension}, 
one can check that the foliation has 4 singularities of degree $p=-1$ (or angle $\pi$).

A \emph{transverse measure} on $\FFF$ is a measure $\mu$ defined on transverse arcs to $\FFF$ that is invariant under homotopy along the leaves and which is finite on compact intervals. A \emph{measured foliation} is a singular foliation endowed with a transverse measure.  We will see that linear involutions and measured foliations are essentially the same objects. In Figure~\ref{fig:suspension}, the natural transverse measure is simply the integral of $dx$ along curves (where $x$ is the natural horizontal coordinate in the plane).

A measured foliation is denoted as  $(X,\Sigma,\FFF,\mu)$ or $(\FFF,\mu)$ when the space $X$ and the set $\Sigma$ are understood.

A \emph{connection} of $\FFF$ is a finite leaf that joins two points of $\Sigma$.

\begin{definition}\label{definition1}
Let $(X,\Sigma,\FFF,\mu)$ be a measured foliation without connection.
A closed segment $I \subset X$ is admissible if
\begin{itemize}
\item it is transverse to $\FFF$,
\item its interior avoids $\Sigma$ and both endpoints are on singular leaves,
\item the leaf segments that join one endpoint to a singularity do not intersect the interior of $I$.
\end{itemize}
\end{definition}
We consider admissible intervals as being oriented, that is,
 having a start and an end.
Because of the transverse measure, there is always  a prefered parametrization for segments: we always assume that parametrization of a segment $\gamma: [0,t] \rightarrow X$ is such that $\mu(\gamma([s,s'])) = s' - s$. In other words, there is a unique parametrization such that $\mu|_I$ is the image of the Lebesgue measure. For a transverse segment $I$ and $\delta > 0$ small enough, there is a neighborhood of $I$ which is isomorphic to $[0,\mu(I)] \times [-\delta,\delta]$, and for which the leaves of the foliation on the rectangle are the vertical segments. For a piece of leaf that crosses the segment $I$, it hence makes sense to say \emph{going up} or \emph{going down}.

We define the {\em Poincar\'e map} of the foliation on $I \times \{0,1\}$ as follows. 
For a point $x \in I$, we define $\sigma_1(x,0)$ as the point $(y,i) \in I \times \{0,1\}$ where $y$ is the first point of the interior of $I$ that is crossed
 by following the leaf from $x$ and going up.
If we arrive from above we set $i=0$ and if not we set $i=1$. 
Next, $\sigma_1(x,1)$ is defined similarly, but following the leaf from $x$ 
by going down. The map $\sigma_1$ is not defined if the leaf encounters
 a singularity before returning into $I$. The map $\sigma_2$ is the exchange $(x,0) \mapsto (x,1)$ and $(x,1) \mapsto (x,0)$. The transformation
$T$ is the composition $\sigma_2 \circ \sigma_1$. 
The sequence $(x,0)$, $T(x,0)$, $T^2(x,0), \ldots$ 
is by construction the sequence of intersections of the leaf from $x$ with $I$.
Note that the  way the  Poincar\'e map of the foliation  works    explains    the notion of   mixed first  return word (see Definition \ref{def:mixed} below).

The \emph{total angle} of a foliation is the sum of
the angles of the singularities.
\begin{lemma} \label{lem:Poincare_map}
Let $(X,\Sigma,\FFF,\mu)$ be a measured foliation without  connection of total angle $(2k-2) \pi$. Let $I$ be an admissible interval. Then the Poincar\'e map induced on $I \times \{0,1\}$ is a $k$-linear involution  without connection.
\end{lemma}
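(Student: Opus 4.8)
The plan is to produce $T=\sigma_2\circ\sigma_1$ \emph{directly} as a linear involution, by reading off from the foliation a generalized permutation, a length datum and a singular set, and then to deduce the absence of connection from the fact that the $T$‑orbits are exactly the traces on $I$ of the leaves of $\FFF$. Throughout I would parametrize $I$ by the transverse measure, so that $I$ is an interval of $\R$ of length $\mu(I)$, and I would analyse $\sigma_1$ locally first. For $z=(x,\delta)$ with $\delta\in\{0,1\}$, the point $\sigma_1(z)$ is the first return to the interior of $I$ of the half‑leaf issued from $x$ (going up if $\delta=0$, down if $\delta=1$), recorded together with the bit saying from which side $I$ is met, and $\sigma_1(z)$ is undefined precisely when this half‑leaf meets $\Sigma$ before coming back. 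Using a flow‑box neighbourhood of $I$ (in which the leaves are the vertical fibres), one checks in a routine way that the set $\Sigma_T$ of points where $\sigma_1$ is undefined is finite, that each connected component of $(I\times\{0,1\})\setminus\Sigma_T$ is carried by $\sigma_1$ onto another such component by a translation $x\mapsto x+c$ or a symmetry $x\mapsto -x+c$ (the transverse measure forces $\sigma_1$ to preserve length), and that $\sigma_1$ is an involution, since following the leaf back retraces the same path. As $\sigma_2$ is the mirror exchange, $T=\sigma_2\circ\sigma_1$ then already has the shape of a linear involution on $\hat I=I\times\{0,1\}$; it remains to count the components.

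Since $\sigma_1$ is an involution it pairs up the components of $(I\times\{0,1\})\setminus\Sigma_T$; no component is fixed, for a component $J$ with $\sigma_1|_J$ a symmetry of $J$ onto itself would have a fixed point lying on a closed leaf, which is impossible as $\FFF$ has no connection. Say there are $2k'$ components, forming $k'$ pairs; choosing for each pair a new letter and naming the two components $I_a$ and $I_{a^{-1}}$, and recording the left‑to‑right order on the two sides of $\hat I$, produces a generalized permutation $\pi$ of type $(\ell,m)$ with $\ell+m=2k'$ on an alphabet $A$ with $k'$ letters. The heart of the matter is that $k'=k$, and for this I would exhibit a bijection between the separatrices of $\FFF$ and $\Sigma_T$: to a separatrix, followed outward from its singularity, associate the point of the interior of $I$ at which it first re‑enters $I$, together with the side bit. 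This is well defined because every half‑leaf of a connectionless measured foliation is recurrent and meets $\mathrm{int}(I)$, and, $I$ being admissible in the sense of Definition~\ref{definition1}, this first re‑entry point does lie in $\Sigma_T$; it is injective because from a given point on a given side at most one point of $\Sigma$ is met before returning; and it is surjective by the very definition of $\Sigma_T$. Since a singularity of angle $q\pi$ carries exactly $q$ separatrices and the total angle is $(2k-2)\pi$, there are $2k-2$ separatrices, hence $|\Sigma_T|=(\ell-1)+(m-1)=2k-2$ and therefore $\ell+m=2k=2k'$.

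To finish the first half of the statement I would set $\lambda_a=\mu(I_a)$ for $a\in A\cup A^{-1}$. Then $\lambda_a=\lambda_{a^{-1}}$ because $\sigma_1$ is a measure‑preserving bijection $I_a\to I_{a^{-1}}$, and $\sum_{i\le\ell}\lambda_{\pi(i)}=\mu(I)=\sum_{i>\ell}\lambda_{\pi(i)}$ because the two sides of $\hat I$ are each partitioned by the corresponding intervals. Thus $(\ell,m,\pi,\lambda)$ is a valid length datum, and the linear involution it defines, together with its singular set, is exactly $(T,\Sigma_T)$; in particular $T$ is a $k$‑linear involution.

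For the absence of connection, the point is that wherever the iterates are defined, $z,Tz,T^2z,\dots$ is by construction the sequence of intersections with $I$ of the leaf through $z$, and dually for $T^{-1}$; consequently a point is a singularity of $T$ (respectively of $T^{-1}$) exactly when the leaf through it, followed forwards (respectively backwards), reaches $\Sigma$ before returning to $\mathrm{int}(I)$. Hence a connection $(x,y,n)$ of $T$, with $T^nx=y$, would yield a finite leaf of $\FFF$ running from a point of $\Sigma$ along the arc $x,Tx,\dots,T^nx=y$ to another point of $\Sigma$, that is, a connection of $\FFF$ --- excluded by hypothesis --- so $T$ has no connection. The step I expect to be the main obstacle is the component count in the second paragraph: it rests on the recurrence of half‑leaves of a connectionless measured foliation (the analogue here of Keane's theorem used in Proposition~\ref{propBL}) and on a careful use of admissibility --- the endpoints of $I$ lying on singular leaves, with the connecting leaf segments avoiding $\mathrm{int}(I)$ --- to guarantee that the endpoints introduce no spurious discontinuity, so that $|\Sigma_T|$ is exactly the number $2k-2$ of separatrices and not larger; everything else is flow‑box bookkeeping.
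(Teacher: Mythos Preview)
Your argument is correct and follows the same strategy as the paper: identify the Poincar\'e map as a piecewise isometry via the transverse measure, then count the subintervals by tying division points to singularities of $\FFF$. The paper phrases the count through the rectangle decomposition (each rectangle $R_a$ has one singularity on each vertical boundary, except for two boundaries arising from the endpoints of $I$), whereas you set up the equivalent explicit bijection between the $2k-2$ separatrices and $\Sigma_T$; your version is in fact more detailed than the paper's --- you spell out the role of admissibility at the endpoints and you argue the absence of connections for $T$, both of which the paper's proof leaves implicit.

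One small point worth tightening: your claim that a component fixed by $\sigma_1$ is ruled out because a closed leaf is ``impossible as $\FFF$ has no connection'' is not literal, since a closed leaf is not itself a connection in the sense of the paper's definition. The statement becomes correct once you add the standard observation that on a surface with $\Sigma\neq\emptyset$ a closed leaf lies in a maximal cylinder (or M\"obius band) whose boundary must contain a saddle connection. This is routine, but as stated the inference skips a step.
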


\begin{proof}
If the foliation has no connection, then each infinite half-leaf intersects $I$.
We consider singularities for the Poincar\'e map, in other words the points in $I$ that run into a singularity before going back to $I$.
This set cuts the domain $I \times \{0,1\}$ into subintervals. 
As the transverse measure is preserved, the Poincar\'e map is an isometry restricted to each of these subintervals. It is hence a linear involution.

For each subinterval $I_a$, let $R_a$ be the rectangle made of the union of the leaf segments that start from $I_a$ to $T(I_a)$. On each of the two vertical boundaries of these rectangles there is exactly one singularity except for two of the extreme rectangles. It follows that there are $k$ pairs of subintervals for the Poincar\'e map.
\end{proof}
Note that if $p_1,\ldots,p_s$ are the degrees of the singularities, then the sum of the angles
is $(p_1+2)\pi+\ldots+(p_s+2)\pi=(2k-2)\pi$, and thus that
\begin{displaymath}
p_1+\ldots+p_s+2s+2=2k.
\end{displaymath}
In the example of Figure~\ref{fig:suspension}, one has $s=4$, $p_1=p_2=p_3=p_4=-1$ and $k=3$.

The following lemma is the converse of Lemma~\ref{lem:Poincare_map}.
\begin{lemma} \label{lem:existence_suspension}
Let $T$ be a linear involution  without  connection.
Then there exists a measured foliation $(X,\Sigma,\FFF,\mu)$ without connection
and an admissible interval $I \subset X$ such that $T$ is conjugate to the Poincar\'e map of the foliation $\FFF$ on $I$.
\end{lemma}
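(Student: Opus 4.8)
The plan is to construct the suspension explicitly from the combinatorial and metric data of $T$, mimicking the classical zippered-rectangle construction of Veech for interval exchanges, but adapted to the two-sided interval $\hat I = I \times \{0,1\}$ and to the presence of symmetries in $\sigma_1$. First I would take, for each letter $a \in A \cup A^{-1}$, a rectangle $R_a = I_a \times [0,h_a]$ of width $\lambda_a = \lambda_{a^{-1}}$ and some positive height $h_a$, with its vertical segments being the local leaves of the intended foliation; the transverse measure is $dx$. The bottom edge of $R_a$ is glued to $I_a$ inside one component of $\hat I$, and the top edge of $R_a$ is glued to $T(I_a) = \sigma_2\sigma_1(I_a)$, which sits in the appropriate component of $\hat I$, via the isometry prescribed by $T$ (a translation or a reflection according to whether $\sigma_1|_{I_a}$ is a translation or a symmetry, composed with the component swap $\sigma_2$). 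Because $\lambda_a = \lambda_{a^{-1}}$ and $\sigma_1$ maps $I_a$ isometrically onto $I_{a^{-1}}$, these gluings are consistent and produce a surface $X$ carrying a foliation $\FFF$ by the vertical segments, with a natural transverse measure $\mu$; the interval $I$ (more precisely its two copies forming $\hat I$) embeds as a transverse segment, and by construction the first-return map of $\FFF$ to $I \times \{0,1\}$, going up from the bottom copy and down from the top copy, is exactly $\sigma_2\sigma_1 = T$.

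The next step is to identify the singular set $\Sigma$ and the local models. The division points of $\Sigma_0 := \Sigma$ (the $2k-2$ endpoints separating the $I_a$'s in the two components) together with the endpoints of $I$ produce, after the gluing, finitely many points where several rectangle corners meet; I would show that around each such point the leaves assemble into a singular chart of the form $z^p(dz)^2 = I$ with $p + 2$ separatrices, where $p+2$ is the number of half-leaves emanating from the point. One then checks a Gauss–Bonnet / Euler-characteristic bookkeeping: the total angle $\sum (p_i + 2)\pi$ equals $(2k-2)\pi$, consistently with Lemma~\ref{lem:Poincare_map}, because there are $2k-2$ division points and the endpoint contributions; here the relation $p_1 + \dots + p_s + 2s + 2 = 2k$ derived just above the statement is the target identity. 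Care is needed to allow the endpoints of $I$ to be $p=-1$ (angle $\pi$) singularities — this is why Definition~\ref{definition1} requires the endpoints to lie on singular leaves — and more generally one must verify that $I$ is admissible, i.e. transverse, interior disjoint from $\Sigma$, endpoints on separatrices, and that the leaf segments joining an endpoint to its singularity do not re-enter the interior of $I$; this last point follows because, by the zippered construction, those segments run along the vertical boundary of the extreme rectangles.

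The remaining step is to transfer the no-connection hypothesis: I would argue that a connection of $\FFF$ (a finite leaf joining two points of $\Sigma$) would, by following it through the rectangles and recording the sequence of crossings of $I$, yield a point $x$ that is a singularity of $T^{-1}$ with $T^n x$ a singularity of $T$ for some $n \ge 0$, i.e. a connection of $T$ in the sense of the Definition of connection; conversely, a connection of $T$ obviously lifts to a connection of $\FFF$. Hence $\FFF$ has no connection, and Lemma~\ref{lem:Poincare_map} then even confirms internal consistency of the construction. Finally I would note that the conjugacy between $T$ and the Poincar\'e map of $\FFF$ on $I$ is literally the identity of $\hat I$ under the embedding, so nothing further is required. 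The main obstacle I anticipate is the careful local analysis at the glued corners — correctly matching up the number of incident half-leaves, handling the symmetry (orientation-reversing) gluings that occur when $\sigma_1|_{I_a}$ is a reflection and that force $X$ to be possibly nonorientable, and verifying that the extreme rectangles contribute the right ($p=-1$) singularities at the endpoints of $I$ rather than spurious higher-order ones. Everything else is a routine, if notation-heavy, verification.
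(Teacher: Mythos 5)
Your construction is essentially the paper's own proof: the paper simply reverses the zippered-rectangle decomposition of Lemma~\ref{lem:Poincare_map}, taking a rectangle $R_a=I_a\times[0,1]$ over each subinterval and gluing the vertical boundaries (not necessarily by isometry, since only the transverse measure $dx$ matters). Your additional steps — the singularity/angle bookkeeping, the admissibility check for $I$, and the equivalence of connections of $\FFF$ with connections of $T$ — are correct and in fact supply details the paper leaves implicit.
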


\begin{proof}
We just use the reverse procedure as in the proof of 
Lemma~\ref{lem:Poincare_map}. For each subinterval $I_a$, we consider a 
rectangle $R_a = I_a \times [0,1]$. The vertical boundaries of the rectangles 
can be glued together to give a foliation. Note that there is no need to glue
the vertical sides of the rectangles by isometry since we are only interested
in the transverse measure $dx$.
\end{proof}
The pair $(\FFF,\mu,I)$ of a measured foliation and an admissible
interval associated with $T$ as above is called a \emph{suspension} of $T$.

\section{Natural codings}\label{sec:returnFund}

We now focus on return words of  linear foliations. 
Algebraic information  on  the set of return words (see Theorem~\ref{theoremReturns} below)   then will  follow from the remark that a section 
captures the geometry of the surface (see Lemma~\ref{lem:key_lemma}) and that the free group is geometrically seen as the fundamental group $\pi_1(X \backslash \Sigma)$.

%%%%%%%%%%%%%%%%%%%%%%%%%%%%%%%%
\subsection{Natural codings of linear involutions}\label{sectionNatural}
In this section, we introduce the  natural coding  of  a  linear
involution $T$. It is  obtained by  first coding the orbits under $T$ with respect to the partition provided
by the intervals $I_a$ ($a \in A \cup A^{-1}$), and then,  by  taking the  language of the  associated symbolic  dynamical system.

%%%%%%%%%%%%%%%%%%%%%%%
%\subsection{Natural coding }\label{subsectionNatural}
Let $T$ be a  linear involution on $I$, let $\hat{I}=I\times\{0,1\}$ and let
$\hat{O}$ be the set defined by Equation~\eqref{eqO}.
Given $z\in \hat{I}\setminus \hat{O}$, the \emph{infinite natural
  coding} of $T$ relative to $z$ is the infinite word
$\Sigma_T(z)=a_0a_1\ldots$
on the alphabet $A\cup A^{-1}$ defined by
\begin{displaymath}
a_n=a\quad\text{ if }\quad T^n(z)\in I_a.
\end{displaymath}
We first observe that the infinite word $\Sigma_T(z)$ is
reduced. Indeed, assume that $a_n=a$ and $a_{n+1}=a^{-1}$ with $a\in
A\cup A^{-1}$. Set $x=T^n(z)$ and $y=T(x)=T^{n+1}(z)$. Then $x\in I_a$
and $y\in I_{a^{-1}}$. But $y=\sigma_2(u)$ with $u=\sigma_1(x)$. Since $x\in
I_a$, we have $u\in I_{a^{-1}}$. This implies that
$y=\sigma_2(u)$ and $u$ belong to the same component of $\hat{I}$, a contradiction.

\begin{definition}[Natural coding]
Let $T$ be a  linear involution.
We   let  ${\mathcal L}(T)$ denote  the set of factors of the infinite natural codings
of $T$. We say that ${\mathcal L}(T)$ is 
the \emph{natural coding} of $T$.  \end{definition}

As  classically done in symbolic dynamics for  codings,    the  set ${\mathcal L}(T)$  can be easily described in terms of intervals
 associated with factors, obtained  by refining the coding partition.
\begin{lemma}
For a  nonempty word $w=a_0a_1\cdots a_{m-1}$ on $A\cup A^{-1}$, we define
$$
I_w=I_{a_0}\cap T^{-1}(I_{a_1})\cap\ldots\cap T^{-m+1}(I_{a_{m-1}}).
$$
By convention, $I_\varepsilon=\hat{I}$.
We have
$$
u\in {\mathcal L } (T)\Longleftrightarrow I_u\ne\emptyset.%\label{equationuinL}
$$
\end{lemma}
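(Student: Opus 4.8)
The plan is to prove the equivalence $u \in \mathcal{L}(T) \iff I_u \neq \emptyset$ in both directions, treating separately the easy case $u = \varepsilon$ (where $I_\varepsilon = \hat I \neq \emptyset$ and $\varepsilon$ is a factor of every infinite coding, so both sides hold) and then the case of a nonempty word $u = a_0 a_1 \cdots a_{m-1}$.

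For the implication $u \in \mathcal{L}(T) \Rightarrow I_u \neq \emptyset$: by definition of $\mathcal{L}(T)$, the word $u$ is a factor of some infinite natural coding $\Sigma_T(z) = b_0 b_1 \cdots$ with $z \in \hat I \setminus \hat O$, say $u = b_n b_{n+1} \cdots b_{n+m-1}$. Set $y = T^n(z)$; this is legitimate because the nonnegative orbit of any point of $\hat I \setminus \hat O$ is well-defined. Then for each $j$ with $0 \le j \le m-1$ we have $T^j(y) = T^{n+j}(z) \in I_{b_{n+j}} = I_{a_j}$, which says precisely that $y \in T^{-j}(I_{a_j})$ for all such $j$, hence $y \in I_u$, so $I_u \neq \emptyset$.

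For the converse $I_u \neq \emptyset \Rightarrow u \in \mathcal{L}(T)$, the key point is that the set $\hat O$ is only countable (it is a countable union of finite sets $T^{-n}(\Sigma)$, together with $\sigma_2(O)$), so $\hat I \setminus \hat O$ is dense; in fact since $I_u$ is a nonempty open set (a finite intersection of preimages of open intervals under the piecewise-isometry $T$, defined off finitely many points at each stage — one should note $I_u$ is open and that the finitely many division points and their finitely many preimages removed still leave a nonempty open set), it cannot be contained in the countable set $\hat O$. So pick $z \in I_u \setminus \hat O$. Its infinite natural coding $\Sigma_T(z) = b_0 b_1 \cdots$ is well-defined, and by the definition of $I_u$ we have $T^j(z) \in I_{a_j}$, hence $b_j = a_j$, for $0 \le j \le m-1$; therefore $u = b_0 \cdots b_{m-1}$ is a prefix, hence a factor, of $\Sigma_T(z)$, so $u \in \mathcal{L}(T)$.

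The main obstacle, and the one subtlety worth spelling out carefully, is the nonemptiness argument in the converse: one must justify that $I_u$ is open (so that, being nonempty, it is uncountable and thus not swallowed by the countable set $\hat O$), which uses that $T$ acts on $\hat I \setminus \hat O$ as a bijection by piecewise isometries and that $I_u$ as defined is an intersection of finitely many sets each of which is open up to removing finitely many points; alternatively one can argue directly that $I_u \cap (\hat I \setminus \hat O)$ is nonempty by observing that $I_u$ contains an open subinterval and $\hat O$ is countable. Once density/uncountability is in hand, the rest is bookkeeping with the definitions of $\Sigma_T$ and $I_u$.
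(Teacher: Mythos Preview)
Your proof is correct and follows essentially the same route as the paper: the forward direction is identical, and for the converse both arguments show that $I_u$ is a nonempty open set and then pick a point of $I_u$ outside the countable set $\hat{O}$. The only difference worth noting is that the paper proves inductively, via $I_{au}=I_a\cap T^{-1}(I_u)$ and the continuity of $T$ on $I_a$, that $I_u$ is in fact an open \emph{interval} (a fact used later in the paper), whereas you only need and only establish that $I_u$ is open.
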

\begin{proof} 
For any $z\in\hat{I}\setminus\hat{O}$, one has $z\in I_u$
if and only if $u$ is a prefix of $\Sigma_T(z)$.

Each set $I_u$ is a (possibly empty) open interval. Indeed, this
is true if $u$ is a letter. Next, assume that $I_u$ is an open interval.
Note that
\begin{equation}
I_{au}=I_a\cap T^{-1}(I_u). \label{equationI}
\end{equation}

Then, by \eqref{equationI}, for $a\in A\cup A^{-1}$, we have 
$T(I_{au})=T(I_a)\cap I_u$ and thus $T(I_{au})$ is an open interval.
Since $I_{au}\subset I_a$, $T(I_{au})$ is the image of $I_{au}$ by a
continuous map and thus $I_{au}$ is also an open interval.

If $u$ is a factor of $\Sigma_T(z)$ for some 
$z\in\hat{I}\setminus\hat{O}$, then $T^n(z)\in I_u$ for some $n\ge
0$ and thus $I_u\ne\emptyset$. Conversely, if $I_u\ne\emptyset$, since $I_u$
is an open interval, it contains some $z\in \hat{I}\setminus\hat{O}$. 
Then $u$ is a prefix of $\Sigma_T(z)$ and thus $u\in {\mathcal L } (T)$.
\end{proof}

Observe that if  $T$ is nonorientable and without connection, then by Proposition~\ref{propBL}, ${\mathcal L } (T)$
is the set of factors of  $\Sigma_T(z)$ for any $z\in\hat{I}\setminus\hat{O}$, that is, 
the set of factors of $\Sigma_T(z)$ does not depend on $z$. 
Indeed,  if $I_u\ne\emptyset$, since the orbit of $z$ is dense
in $\hat{I}$, there
is an $n\ge 0$ such that $T^n(z)\in I_u$ and thus $u$ is a factor of $\Sigma_T(z)$.

\begin{proposition} \label{prop:inverse}
Let $T=\sigma_2\circ\sigma_1$ be a linear involution.
 For any nonempty word  $u\in {\mathcal L } (T)$, one has $I_{u^{-1}}=\sigma_1T^{|u|-1}(I_u)$.
Consequently the set ${\mathcal L } (T)$ is closed under taking inverses. It is thus a laminary set.
\end{proposition}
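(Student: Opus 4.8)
The plan is to track what the transformation $T$ does along an orbit in reverse, using the defining identity $T = \sigma_2 \circ \sigma_1$ and the fact that $\sigma_1, \sigma_2$ are involutions, so that $T^{-1} = \sigma_1 \circ \sigma_2$. The key observation is that if $w = a_0 a_1 \cdots a_{m-1}$ is the coding of a piece of orbit $z, T(z), \ldots, T^{m-1}(z)$, then applying $\sigma_1$ to the last point $T^{m-1}(z) \in I_{a_{m-1}}$ lands in $I_{a_{m-1}^{-1}}$ (since $\sigma_1$ maps $I_a$ onto $I_{a^{-1}}$), and from that point the orbit of $T$ retraces the original orbit backwards with all letters inverted. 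More precisely, I expect that the point $\sigma_1 T^{m-1}(z)$ has infinite natural coding beginning with $a_{m-1}^{-1} a_{m-2}^{-1} \cdots a_0^{-1} = w^{-1}$.

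First I would prove the key identity $I_{u^{-1}} = \sigma_1 T^{|u|-1}(I_u)$ by induction on $|u| = m$. For $m = 1$, $u = a$ is a letter and $\sigma_1 T^0(I_a) = \sigma_1(I_a) = I_{a^{-1}}$ by part (i) of the definition of a linear involution, which gives the base case. For the inductive step, write $u = av$ with $a \in A \cup A^{-1}$ and $|v| = m-1$. Using \eqref{equationI}, $I_{av} = I_a \cap T^{-1}(I_v)$, so $T(I_{av}) = T(I_a) \cap I_v$, hence $T^{m-1}(I_{av}) = T^{m-2}(T(I_a) \cap I_v)$. I would compare this with the analogous expression for $u^{-1} = v^{-1} a^{-1}$, namely $I_{v^{-1}a^{-1}} = I_{v^{-1}} \cap T^{-(m-1)}(I_{a^{-1}})$, and apply the inductive hypothesis $I_{v^{-1}} = \sigma_1 T^{m-2}(I_v)$. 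The computation reduces to the relation $\sigma_1 T = T^{-1} \sigma_1$, which follows from $\sigma_1 T = \sigma_1 \sigma_2 \sigma_1 = (\sigma_1 \sigma_2) \sigma_1 = T^{-1} \sigma_1$ since $\sigma_1, \sigma_2$ are involutions. Carefully chasing the intervals through this conjugation relation yields the stated identity; this bookkeeping is the main technical point, though it is routine once the conjugation $\sigma_1 T = T^{-1} \sigma_1$ is in hand.

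From the identity the two consequences follow immediately. If $u \in {\mathcal L}(T)$ is nonempty, then $I_u \neq \emptyset$ by the preceding lemma; since $\sigma_1$ and $T$ are bijections (on the appropriate cocompact domains) and $\sigma_1$ sends open intervals to open intervals and $T$ sends open intervals to open intervals (as noted in the proof of the preceding lemma), $I_{u^{-1}} = \sigma_1 T^{|u|-1}(I_u)$ is a nonempty open interval, hence $u^{-1} \in {\mathcal L}(T)$. The empty word is its own inverse and lies in ${\mathcal L}(T)$ trivially. Thus ${\mathcal L}(T)$ is symmetric. Since ${\mathcal L}(T)$ is factorial by definition (it is the set of factors of a family of infinite words) and consists of reduced words (as shown just before the definition of natural coding), it is by Definition~\ref{def:laminary} a laminary set on $A$.

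I do not anticipate a serious obstacle: the only delicate part is keeping the composition orders straight when iterating the conjugation $\sigma_1 T = T^{-1}\sigma_1$, i.e.\ verifying that $\sigma_1 T^{m-1} = T^{-(m-1)}\sigma_1$ and then matching this against the recursive descriptions of $I_u$ and $I_{u^{-1}}$ coming from \eqref{equationI}. Everything else is a direct consequence of the preceding lemma and the basic structure $T = \sigma_2\circ\sigma_1$ with $\sigma_i^2 = \mathrm{id}$.
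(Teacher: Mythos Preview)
Your proposal is correct and follows essentially the same approach as the paper: induction on $|u|$, base case via $\sigma_1(I_a)=I_{a^{-1}}$, and an inductive step driven by the identity $\sigma_1 T = \sigma_1\sigma_2\sigma_1 = T^{-1}\sigma_1$ together with Equation~\eqref{equationI}. The only cosmetic difference is that the paper peels off a letter on the right (writing $u a$ and reducing to $I_{a^{-1}u^{-1}} = T^{-1}(I_{u^{-1}})\cap I_{a^{-1}}$), whereas you peel off on the left (writing $a v$); both computations are equivalent once the conjugation relation is in hand.
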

\begin{proof}
To prove the  assertion, we use an induction on the
length of $u$. The property holds for $|u|=1$ by definition of $\sigma_1$.
 Next, consider
 $u\in {\mathcal L } (T)$ and $a\in A\cup A^{-1}$ such that $ua\in {\mathcal L } (T)$. We assume by induction
hypothesis that $I_{u^{-1}}=\sigma_1T^{|u|-1}(I_u)$.

Since $T^{-1}=\sigma_1\circ\sigma_2$,
\begin{eqnarray*}
\sigma_1T^{|u|}(I_{ua})&=&\sigma_1T^{|u|}(I_u\cap T^{-|u|}(I_a))
=\sigma_1T^{|u|}(I_u)\cap \sigma_1(I_a)\\
&=&\sigma_1\sigma_2\sigma_1T^{|u|-1}(I_u)\cap \sigma_1(I_a)
=\sigma_1\sigma_2(I_{u^{-1}})\cap I_{a^{-1}}=I_{a^{-1}u^{-1}}
\end{eqnarray*}
where the last equality results from the application of
Equation~\eqref{equationI} to the word $a^{-1}u^{-1}$.

We  easily  deduce that   the set ${\mathcal L } (T)$ is closed under taking inverses. 
 Furthermore it  is a   factorial subset of the free group $F_A$. It  is  thus a laminary set.
\end{proof}

%%%%%%%%%

\begin{example}%[A  linear involution that is a fixed point of a morphism] 
Let $T$ be the  linear involution of Example \ref{exampleInvolution3}.
The set $S={\mathcal L } (T)$ can  actually be defined directly as the set of factors
of the substitution
\begin{displaymath}
f:a\mapsto cb^{-1},\quad b\mapsto c,\quad c\mapsto ab^{-1}
\end{displaymath}
which extends to an automorphism of the free group $F_A$.
The verification uses the Rauzy induction initially defined by
Rauzy and extended to linear involutions in~\cite{BoissyLanneau2009}.
The Rauzy induction applied to $T$ gives the linear involution $T'$
represented in Figure~\ref{RauzyFigure} on the left. It is the transformation
induced by $T$ on the interval obtained by erasing the smallest interval
on the right, namely $I_{a^{-1}}$.

The Rauzy induction applied on $T'$  is obtained by erasing the
smallest interval on the right, namely $I_{b^{-1}}$. It
gives a transformation $T''$
represented in Figure~\ref{RauzyFigure} on the right.

The transformation $T''$
 is the same as $T$ up to normalization of the length of the interval,
 exchange of the two components and the permutation (written in cycle form)
$\pi=(a\, c\, b\, a^{-1}\, c^{-1}\, b^{-1})$ (see Figure~\ref{RauzyFigure})
which sends $a$ to $c$, $c$ to $b$ and so on.
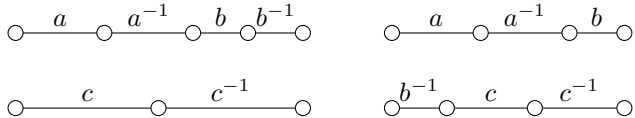
\begin{figure}[hbt]
\centering
\gasset{AHnb=0,Nadjust=wh}
\begin{picture}(100,20)
\put(0,0){
\begin{picture}(50,20)
\node(h0)(0,10){}\node(abar)(11.8,10){}\node(b)(23.6,10){}\node(bbar)(30.9,10){}\node(h1)(38.2,10){}
\node(b0)(0,0){}\node(cbar)(19,0){}\node(b1)(38.2,0){}

\drawedge(h0,abar){$a$}\drawedge(abar,b){$a^{-1}$}\drawedge(b,bbar){$b$}\drawedge(bbar,h1){$b^{-1}$}
\drawedge(b0,cbar){$c$}\drawedge(cbar,b1){$c^{-1}$}
\end{picture}
}
\put(50,0){
\begin{picture}(50,20)
\node(h0)(0,10){}\node(abar)(11.8,10){}\node(b)(23.6,10){}\node(h1)(30.9,10){}
\node(b0)(0,0){}\node(c)(7.3,0){}\node(cbar)(19,0){}\node(b1)(30.9,0){}

\drawedge(h0,abar){$a$}\drawedge(abar,b){$a^{-1}$}\drawedge(b,h1){$b$}
\drawedge(b0,c){$b^{-1}$}\drawedge(c,cbar){$c$}\drawedge(cbar,b1){$c^{-1}$}
\end{picture}
}
\end{picture}
\caption{The transforms $T'$ and $T''$ of $T$ by Rauzy induction.}\label{RauzyFigure}
\end{figure} 

Set $S={\mathcal L } (T)$, $S'={\mathcal L}(T')$ and $S''={\mathcal L}(T'')$ and let  $\Fact(X)$
denote  the set of factors of a set of
words $X$.
Since $T'$ is obtained from $T$ by a Rauzy induction,
there is an associated  automorphism $\tau'$
of the free group such that $S=\Fact(\tau'(S'))$.
One has actually $\tau:a\mapsto ab^{-1}, b\mapsto b,c\mapsto c$.
Similarly, one has $S'=\Fact(\tau''(S''))$ with
$\tau'':a\mapsto a,b\mapsto bc^{-1},c\mapsto c$. 
Set $\tau=\tau'\circ\tau''$. It is easy
to verify that $f=\tau\circ \pi^{-1}$. Since 
$S=\Fact(\tau(S''))=\Fact(\tau\pi^{-1}(S))=\Fact(f(S))$,
we obtain that $S$ is the set of factors of the fixpoint of $f$ 
as claimed above.
\end{example}

%%%%%%%%%%
\subsection{Orientability and uniform recurrence }
We gather  here  basic properties of the language ${\mathcal L } (T)$ of a linear involution.
We recall that  the notion of orientability for  a laminary  set  was introduced  in Section \ref{sectionPreliminaries}.
\begin{proposition}\label{propOrientable} 
Let $T$ be a linear involution.
If $T$ is orientable, then ${\mathcal L } (T)$ is orientable.
The converse is true  if $T$ has no connection.
\end{proposition}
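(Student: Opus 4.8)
The forward direction is essentially a definitional check, so I would dispatch it first. Suppose $T$ is orientable, meaning no index pair $i,j\le\ell$ (nor $i,j\ge\ell+1$) has $\pi(i)=\pi(j)^{-1}$; equivalently, for every $a\in A\cup A^{-1}$, the intervals $I_a$ and $I_{a^{-1}}$ lie in \emph{distinct} components of $\hat I$. Set $S_+ = \LL(T)\cap (A^*\text{-type words read along }I\times\{0\})$ — more precisely, let $S_+$ be the set of factors of infinite codings $\Sigma_T(z)$ with $z\in I\times\{0\}$, and $S_-$ the factors of those with $z\in I\times\{1\}$. Since each component of $\hat I$ is stable under $T$ (this is exactly orientability, as remarked after the definition of minimality), an orbit starting in $I\times\{0\}$ stays in $I\times\{0\}$, so $S_+$ and $S_-$ are each factorial, $S=S_+\cup S_-$, and $S_+\cap S_-=\{\varepsilon\}$ because a nonempty word $w$ with $I_w$ meeting both components would, reading its first letter, force some $I_a$ to meet both components, contradicting orientability. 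Finally $x\in S_-\iff x^{-1}\in S_+$: by Proposition~\ref{prop:inverse}, $I_{x^{-1}}=\sigma_1 T^{|x|-1}(I_x)$, and $\sigma_1$ sends $I\times\{0\}$-located points to $I\times\{1\}$-located points and vice versa when $T$ is orientable (again because $I_a$ and $I_{a^{-1}}$ are in different components, so $\sigma_1$ always swaps components). Hence $S=\LL(T)$ is orientable.

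\textbf{The converse.} Now assume $T$ has no connection and $\LL(T)=S$ is orientable, with components $S_+,S_-$; these are unique up to interchange because, by Proposition~\ref{propBL}, a nonorientable $T$ without connection is minimal, hence $S$ is uniformly semi-recurrent, hence semi-recurrent, and the uniqueness statement in Section~\ref{sectionPreliminaries} applies — but wait, that would already settle it if $T$ were nonorientable, so the real content is: I must show $T$ nonorientable leads to a contradiction with $S$ orientable. Suppose for contradiction that $T$ is nonorientable and without connection. Then by Proposition~\ref{propBL}, $T$ is minimal, so for any $z\in\hat I\setminus\hat O$ the orbit of $z$ is dense in \emph{all} of $\hat I$, i.e.\ it meets both components; consequently $\LL(T)$ is the set of factors of a single $\Sigma_T(z)$, and this single infinite word has factors located (via the $I_w$) in both components. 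Concretely, nonorientability gives some $a\in A\cup A^{-1}$ with $I_a$ and $I_{a^{-1}}$ in the same component of $\hat I$.

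\textbf{Deriving the contradiction.} I would use this letter $a$ to produce a word and its inverse both lying in the \emph{same} component $S_+$, contradicting $S_+\cap S_-=\{\varepsilon\}$ together with $x\in S_+\iff x^{-1}\in S_-$. Since $a\in\LL(T)$ and $\LL(T)$ is uniformly semi-recurrent (minimality), there is $v\in\LL(T)$ with $ava\in\LL(T)$ or $ava^{-1}\in\LL(T)$; a short case analysis using that $I_a,I_{a^{-1}}$ share a component should yield a nonempty word $w$ with both $w$ and $w^{-1}$ having their defining intervals $I_w,I_{w^{-1}}$ inside the same component — e.g.\ take $w$ a factor starting and ending with letters whose intervals lie in that common component, and note that by Proposition~\ref{prop:inverse} the interval $I_{w^{-1}}=\sigma_1T^{|w|-1}(I_w)$ lands where $\sigma_1$ sends it, and $\sigma_1(I_a)=I_{a^{-1}}$ is in the same component as $I_a$. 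Then $w$ and $w^{-1}$ are both in whichever of $S_+,S_-$ contains that component, say both in $S_+$; but $w^{-1}\in S_+$ and $w\in S_+$ forces $w\in S_-$ (from $x\in S_-\iff x^{-1}\in S_+$), so $w\in S_+\cap S_-=\{\varepsilon\}$, contradicting $w$ nonempty. This contradiction shows $T$ must be orientable.

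\textbf{Main obstacle.} The delicate point is the last step: pinning down precisely which word $w$ to extract so that $w$ and $w^{-1}$ provably land in the same component, and handling the fact that a factor of the coding is a word in $A\cup A^{-1}$ whose "component" is determined not by a single letter but by where the subinterval $I_w$ sits. The cleanest route is probably to track, for each factor $w$ of $\Sigma_T(z)$ the component of $\hat I$ containing $I_w$: one checks $I_w$ lies in a single component (it is an interval inside some $I_{a_0}$), that passing from $w$ to $w^{-1}$ via $\sigma_1$ either preserves or swaps components according to orientability, and that nonorientability ($I_a,I_{a^{-1}}$ in the same component for some $a$) forces, somewhere along the minimal orbit, a factor whose mirror behavior violates the clean two-component splitting. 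I expect this bookkeeping — rather than any deep idea — to be where the care is needed.
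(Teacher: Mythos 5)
Your forward direction is correct and is essentially the paper's argument: take $S_\pm$ to be the words $u$ with $I_u$ in $I\times\{0\}$ resp.\ $I\times\{1\}$, use that orientability makes each component $T$-invariant and makes $\sigma_1$ swap components, and conclude via Proposition~\ref{prop:inverse}.

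The converse, however, has a genuine gap at its central step. Having found (correctly, via Proposition~\ref{propBL}) a letter $a$ with $I_a,I_{a^{-1}}$ in the same component of $\hat I$, you try to conclude that $w$ and $w^{-1}$ lie in the same part $S_+$ of the assumed orientable decomposition because their intervals $I_w,I_{w^{-1}}$ lie in the same component of $\hat I$ ("whichever of $S_+,S_-$ contains that component"). But in the converse direction $S_+,S_-$ are abstract factorial sets coming from the hypothesis that $S$ is orientable; nothing identifies them with the two components of $\hat I$, and indeed no such identification can exist when $T$ is nonorientable, since then $S$ is the factor set of a single orbit visiting both components. So the bookkeeping you flag as the "main obstacle" is not just delicate --- as set up, it proves nothing about membership in $S_\pm$. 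The correct bridge, which is what the paper uses, is factoriality of the parts applied to a single word: minimality gives $z\in I_a$ and $n>0$ with $T^n(z)\in I_{a^{-1}}$, hence a word $aua^{-1}\in S$; whichever of $S_+,S_-$ contains $aua^{-1}$ contains both $a$ and $a^{-1}$ because the parts are factorial, and then $a\in S_+$ forces $a^{-1}\in S_-$, so $a^{-1}\in S_+\cap S_-=\{\varepsilon\}$, a contradiction. Note also that your detour through uniform semi-recurrence only yields "$ava\in S$ or $ava^{-1}\in S$" and leaves the first case unresolved, whereas applying minimality directly to the orbit from $I_a$ into $I_{a^{-1}}$ produces the needed word $aua^{-1}$ with no case split.
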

\begin{proof}
Let $T$ be a linear involution and let $S={\mathcal L } (T)$.
Assume that $T$ is orientable. Set $S_+=\{u\in S\mid I_u\subset I\times\{0\}\}\cup\{\varepsilon\}$
and $S_-=\{u\in S\mid I_u\subset I\times\{1\}\}\cup\{\varepsilon\}$. Then $S=S_+\cup S_-$.
Since $T$ is orientable, we have $u\in S_+$ (resp. $u\in S_-$)
if and only if all letters
of $u$ are in $S_+$ (resp. in $S_-$). 
This shows that $S_+\cap S_-=\{\varepsilon\}$, that $S_+,S_-$ are factorial,
and that $u\in S_+$ if and only if $u^{-1}\in S_-$. Thus $S$ is orientable.

Conversely, assume that $T$  is nonorientable and has no connection.
 Let $a\in A$
be such that $I_a,I_{a^{-1}}\subset I\times\{0\}$. Since 
 $T$  is minimal by Proposition~\ref{propBL}, 
there is some $z\in I_a$ and $n>0$ such that $T^n(z)\in I_{a^{-1}}$.
Thus $S$ contains a word of the form $aua^{-1}$. This implies that
$S$ is nonorientable.
\end{proof}
The following statement can be easily deduced from the similar statement for interval exchange transformations (see~\cite[p. 392] {BertheRigo2010}).
\begin{proposition}\label{propUR}
Let $T$ be a  linear involution without connection. If $T$ is nonorientable,
then ${\mathcal L } (T)$ is uniformly recurrent. Otherwise, ${\mathcal L } (T)$ is uniformly
semi-recurrent.
\end{proposition}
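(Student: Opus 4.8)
The plan is to reduce both statements to the known uniform recurrence of the natural coding of an interval exchange transformation without connection \cite[p.~392]{BertheRigo2010}, using Proposition~\ref{propBL} and Proposition~\ref{propOrientable}.

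\emph{Nonorientable case.} By Proposition~\ref{propBL}, $T$ is minimal and the return time to every interval of positive length included in $\hat{I}$ takes finitely many values; these are exactly the two ingredients behind the uniform recurrence of the coding of an interval exchange, and the argument transfers without change. In detail, fix $u\in\mathcal{L}(T)$, so that $I_u$ is a nonempty open interval with return time $\rho_{I_u}$ bounded by some $R$. A standard tower (Rokhlin) argument --- writing the first return map of $T$ to $I_u$ as an exchange of finitely many subintervals of $I_u$ with return times $\le R$, and checking that the corresponding columns cover $\hat{I}$ up to the orbit of their endpoints --- shows that the \emph{first entrance} time into $I_u$ from an arbitrary point of $\hat{I}\setminus\hat{O}$ is also bounded by $R$. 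Consequently, taking $N=R+|u|$, every factor of length $N$ of an infinite coding $\Sigma_T(z)$ contains $u$: among $T^n(z),\dots,T^{n+R}(z)$ some iterate lies in $I_u$, and the $|u|$ letters read from that position spell $u$. Since $T$ is minimal, $\mathcal{L}(T)$ is right extendable and equals $\Fact(\Sigma_T(z))$ for every $z$, so $\mathcal{L}(T)$ is uniformly recurrent.

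\emph{Orientable case.} Here $T$ preserves each component of $\hat{I}$, and, as explained in Section~\ref{sectionInvolutions}, its restriction $T_0$ to $I\times\{0\}$ is a $k$-interval exchange transformation when $T$ is coherent and a $k$-interval exchange with flips otherwise, while its restriction $T_1$ to $I\times\{1\}$ is the inverse of $T_0$. The natural codings of $T_0$ and $T_1$ are exactly the components $S_+$ and $S_-$ of $\mathcal{L}(T)$ produced in the proof of Proposition~\ref{propOrientable}, and they satisfy $\mathcal{L}(T)=S_+\cup S_-$, $S_+\cap S_-=\{\varepsilon\}$, and $x\in S_+\iff x^{-1}\in S_-$. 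By Proposition~\ref{propBL}, $T_0$ and $T_1$ are minimal without connection, hence $S_+$ and $S_-$ are uniformly recurrent: by the cited result in the coherent case and, in the non-coherent case, by applying that result to the honest interval exchange $T'$ built from $T_0$ through the double cover $\widetilde{T}$ exactly as in the proof of Proposition~\ref{propBL}. Now fix $u\in\mathcal{L}(T)$, say $u\in S_+$ (the case $u\in S_-$ is symmetric, and $u=\varepsilon$ is trivial); choose $n_+$ so that every word of length $n_+$ in $S_+$ contains $u$ and $n_-$ so that every word of length $n_-$ in $S_-$ contains $u^{-1}$, and put $n=\max(n_+,n_-)$. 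A word of length $n$ in $\mathcal{L}(T)$ lies entirely in $S_+$ or entirely in $S_-$, hence contains $u$ or $u^{-1}$; together with right extendability (from the minimality of $T_0$ and $T_1$), this gives that $\mathcal{L}(T)$ is uniformly semi-recurrent.

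The one nonroutine point is the passage from ``bounded return time to $I_u$'', furnished by Proposition~\ref{propBL}, to ``bounded first entrance time into $I_u$'' --- that is, the tower argument --- together with the index bookkeeping ensuring that a long enough factor of $\Sigma_T(z)$ genuinely contains a full occurrence of $u$ rather than merely visiting $I_u$ at its extreme right end. A secondary detail is the use of the interval exchange result in the flipped (non-coherent orientable) setting, which is covered by the reduction already performed in the proof of Proposition~\ref{propBL}.
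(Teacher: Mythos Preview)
Your argument is correct and follows the same skeleton as the paper's proof: bound the return time to $I_u$ via Proposition~\ref{propBL}, then use minimality to force an occurrence of $u$ (or $u^{-1}$) in every sufficiently long word. Your explicit tower argument for passing from bounded \emph{return} time to bounded \emph{entrance} time is a point the paper glosses over, so that care is well placed.

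The one place where the paper is noticeably more economical is the orientable case. You split $\mathcal{L}(T)=S_+\cup S_-$, prove each component uniformly recurrent (with a further coherent/non-coherent case split), and then recombine. The paper avoids all of this: it runs the very same bounded-return-time argument on a single component, after observing that $I_u$ and $I_{u^{-1}}$ necessarily lie in \emph{different} components of $\hat I$ (else $S$ would contain a word $uvu^{-1}$ and fail to be orientable). Hence $I_w$ shares a component with exactly one of $I_u$, $I_{u^{-1}}$, and minimality on that component forces $u$ or $u^{-1}$ to occur in $w$. In particular, your detour through \cite[p.~392]{BertheRigo2010} and the double cover $\widetilde{T}$ for the non-coherent orientable case is unnecessary: Proposition~\ref{propBL} already supplies the bounded return time uniformly, without distinguishing coherent from non-coherent.
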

\begin{proof}
Set $S={\mathcal L } (T)$.
Let $u\in S$ and let $N$ be the maximal return time to $I_u$
(this exists by Proposition~\ref{propBL}).
Thus for any $z\in \hat{I}$ such that $\rho_{I_u}(z)$ is finite, we have $\rho_{I_u}(z)\le N$. 
Let $w$ be a word of $S$ of length $N+|u|$ and let $z\in\hat{I}\setminus\hat{O}$
be such that $\Sigma_T(z)$ begins with $w$.

If $T$ is nonorientable, by Proposition~\ref{propBL}, it is minimal.
Thus there exists $n>0$ such that $T^n(z)\in I_u$. This implies that
$\rho_{I_u}(z)$ is finite and thus that $\rho_{I_u}(z)\le N$. This implies
in turn that $u$ is a factor of $w$. We conclude that $S$ is uniformly
recurrent.

If $T$ is orientable, then the restriction of $T$ to each component
of $\hat{I}$ is minimal. By Proposition~\ref{propOrientable}, $S$
is orientable. Thus $I_u$ and $I_{u^{-1}}$ cannot be included in the
same component of $\hat{I}$, since otherwise $S$ would contain
a word of the form $uvu^{-1}$,  and $S$ would be nonorientable.
Thus $I_w$ is in the same component as $I_u$ or $I_{u^{-1}}$, 
and we conclude as above that $u$ or $u^{-1}$ is a factor of $w$. This shows that
$S$ is uniformly semi-recurrent.
\end{proof}
%%%%%%%%%%%%%%%%%%

%%%%%%%%
\subsection{Return words and the even group}\label{subsec:return}

In this section, we first  introduce  odd and even words, and then discuss  various notions of return words.
%Let $T$ be a nonorientable linear involution on $I$ without connection and let
%$S={\mathcal L } (T)$ be its natural coding.

\begin{definition}[Even  group]
Let $T$ be a   linear involution on $I$ without connection. 

We say that a letter $a\in A$ is \emph{even} (with respect to $T$)
if $I_a$ and $I_{a^{-1}}$ belong to distinct components of $\hat{I}$ and \emph{odd},  otherwise.

A reduced word is said to be \emph{even} if it has an even number of odd letters and
said to be \emph{odd}, otherwise. 
In particular, if  $T$ is orientable,  all words are even.

The {\em even group}  is  the subgroup of the free group $F_A$  formed by the even words.
\end{definition}
Note that a word $w$ is even
if and only if for any $z\in I_w$, the points $z$ and $T^{|w|}(z)$
belong to the same component.   Since $\sigma_2 I_{w^{-1}}=T^{|w|}(I_w)$ according to  Proposition \ref{prop:inverse}, $w$ is even if and only if $I_{w}$ and $I_{w ^{-1}}$ belong to distinct components of $\hat{I}$. Hence  a word $w$ is even
if and only if  $ I_w$   and $T^{-|w|}I_w$
belong to the same component.  

If  $T$ is assumed to be nonorientable, the  even group is a  subgroup of index $2$
of $F_A$; it has thus   rank $2  \Card{A}  -1$ according to Schreier's formula.

\begin{example}\label{exampleEvenOdd}
 Let $T$ be the linear involution of Example \ref{exampleInvolution3}.
The letter $a$ is even and the letters $b,c$ are odd. The even group is generated by
the set $X=\{a,b\bar{a}c,b\bar{c},\bar{b}\bar{c},\bar{b}c\}$.
\end{example}

We now introduce several notions of return words.
Let $T$ be a linear involution on $I$ relative to the alphabet $A$
and let $S={\mathcal L } (T)$ be its natural coding.  Recall that $S$ is a factorial  subset of the free  group $F_A$.

For  a set $X\subset S$, a \emph{complete return word}
to $X$ is a word of $S$ which has a proper prefix in $X$ and
a proper suffix in $X$. 
 A \emph{complete first return word}    is a  complete return word  to $X$  that  has no internal factor in $X$.
 %The set   of simple complete return
%words to $X$
% is a bifix code.
If $S$ is uniformly recurrent (in particular, if  $T$ is  nonorientable and  without connection, by Proposition \ref{propUR}),   the set of  complete  first return
words to $X$  is finite for any finite
set $X$.

We now    focus on return words for  two  types of sets $X$, namely    sets reduced to   one word  or  symmetric sets of the form $\{ w, w^{-1}\}$.

By considering  the set $\{w\}$, one recovers  the classical notion of return word.
 For any $w\in S$, a \emph{first right return
word} to $w$ in $S$ is a word $u$ such that $wu$ is a  complete first  return
word to $\{w\}$. We denote by $\RR_S(w)$ the set of first right return
words to $w$ in $S$.  We define similarly first  left return words.

\begin{remark}Note that all elements of $\RR_S(x)$ are even.
Indeed, if $w\in\RR_S(x)$, we have $xw=vx$ for some $v\in S$. 
We assume  w.l.o.g. that  $x$ is odd and that $I_x\subset I \times \{0\}$. Take $z \in I_{xw}$. Then $T^{|x|} (z) \in  I \times \{1\}$  since $x$ is odd.
One has  $T^{|x|} (z) \in  I_w$. Hence $I_w   \subset  I \times \{1\}$. But  $T^{|w|} (I_w) \subset  T^{-|x|} I_x \subset  I \times \{1\}$ (again since $x$ is odd).
Hence  $T^{|w|} (I_w)$ and $I_w$ belong to the same component and $w$ is even. The other cases can be handled similarly.
\end{remark}

For $w\in S$, we also  consider  complete first  return words to the
set $X=\{w,w^{-1}\}$ in $S$.   We  let  $\CR_S(w)$ denote   this set and call its elements  the \emph{ complete  first return words to $\{w,w^{-1}\}$}.

In order to  provide  a connection between  return words  and  elements of a symmetric basis of the free group,   we need to introduce
a further  notion that plays the  role of  usual first return words in symbolic dynamics.
\begin{definition}Mixed  first return words]\label{def:mixed}
With a complete return word $u$ to  the set $\{w,w^{-1}\}$, we associate a word $N(u)$  as follows: if $u$ has  $w$ as prefix, we erase it
and if $u$ has a suffix $w^{-1}$, we also erase it.  Such a word is called a   {\em   mixed return word}.

The words $N(u)$ for $u$ complete first  return word  to  $\{w,w^{-1}\}$ are called {\em   mixed first  return words}.
We  let  $\MR_S(w)$ denote   this  set. \end{definition}
Note that the two operations  described above  can be made in any order since
$w$ and $w^{-1}$ cannot overlap.
 Note  also that $\MR_S(w)$ is symmetric and that $w^{-1}\MR_S(w)w=\MR_S(w^{-1})$.

If $T$ is  orientable,  then  $\MR_S(w)$  is equal to the union of the  set   of first right return
words to $w$ with   the  set   of first left  return
words to $w^{-1}.$

Observe  that any uniformly recurrent 
biinfinite word $x$ such that $F(x)=S$ can be uniquely written 
as a concatenation of  mixed first  return words (see Figure~\ref{figureFact}).
Note also  that successive occurrences of $w$ may overlap but that
successive occurrences of $w$ and $w^{-1}$ cannot.
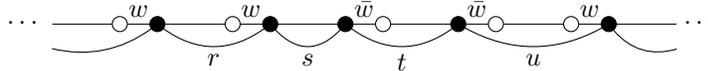
\begin{figure}[hbt]
\centering\gasset{Nadjust=wh,AHnb=0}
\begin{picture}(100,10)(0,-3)
\put(0,0){$\ldots$}
\node[Nframe=n](0)(5,0){}\node[Nframe=n](1b)(5,-3){}
\node(1)(15,0){}\node[fillcolor=black](2)(20,0){}
\node(3)(30,0){}\node[fillcolor=black](4)(35,0){}
\node[fillcolor=black](5)(45,0){}\node(6)(50,0){}\node[fillcolor=black](7)(60,0){}
\node(8)(65,0){}\node(9)(75,0){}\node[fillcolor=black](10)(80,0){}\node[Nframe=n](11)(90,0){}
\node[Nframe=n](11b)(90,-3){}

\drawedge(0,1){}\drawedge(1,2){$w$}\drawedge[curvedepth=-2](1b,2){}
\drawedge(2,3){}\drawedge(3,4){$w$}\drawedge[curvedepth=-3,ELside=r](2,4){$r$}
\drawedge(4,5){}\drawedge(5,6){$\bar{w}$}\drawedge[curvedepth=-3,ELside=r](5,7){$t$}
\drawedge[curvedepth=-3,ELside=r](4,5){$s$}\drawedge(6,7){}
\drawedge(7,8){$\bar{w}$}\drawedge(8,9){}\drawedge(9,10){$w$}
\drawedge[curvedepth=-3,ELside=r](7,10){$u$}\drawedge[curvedepth=-2](10,11b){}
\drawedge(10,11){}\put(90,0){$\ldots$}
\end{picture}
\caption{A uniformly recurrent infinite word factorized as an infinite product
$\cdots rstu\cdots$ of  mixed  first return words to $w$.}\label{figureFact}
\end{figure}

\begin{example}\label{exampleInvolution3bis}
Let $T$ be the  linear involution of Example~\ref{exampleInvolution3}.
We have 
\begin{eqnarray*}
\CR_S(a)&=&\{a\bar{b}cb\bar{a},a\bar{b}cb\bar{c}a,\bar{a}c\bar{b}\bar{c}a,a\bar{b}\bar{c}b\bar{a},\bar{a}cb\bar{c}a,\bar{a}c\bar{b}\bar{c}b\bar{a}\}\\
\CR_S(b)&=&\{b\bar{a}cb,b\bar{a}c\bar{b},b\bar{c}a\bar{b},\bar{b}cb,
\bar{b}\bar{c}a\bar{b},\bar{b}\bar{c}b\},\\
\CR_S(c)&=&\{cb\bar{a}c,cb\bar{c},c\bar{b}\bar{c},\bar{c}a\bar{b}c,
\bar{c}a\bar{b}\bar{c},\bar{c}b\bar{a}c\}
\end{eqnarray*}
and 
\begin{eqnarray*}
\MR_S(a)&=&\{\bar{b}cb,\bar{b}cb\bar{c}a,\bar{a}c\bar{b}\bar{c}a
,\bar{b}\bar{c}b,\bar{a}cb\bar{c}a,\bar{a}c\bar{b}\bar{c}b\}\\
\MR_S(b)&=&\{\bar{a}cb,\bar{a}c,\bar{c}a,\bar{b}cb,
\bar{b}\bar{c}a,\bar{b}\bar{c}b\},\\
\MR_S(c)&=&\{b\bar{a}c,b,\bar{b},\bar{c}a\bar{b}c,
\bar{c}a\bar{b},\bar{c}b\bar{a}c\}.
\end{eqnarray*}
\end{example}

The reason for introducing the notion  of mixed return words (see  Definition \ref{def:mixed})
comes from the fact that %, when $S$ is the natural coding
%of a linear involution, 
we are interested in the transformation induced
on $I_w\cup\sigma_2(I_w)$, according to Section \ref{sectionGeometric}.
The natural coding of a point in $I_w$ begins
with $w$ while the natural coding of a point $z$ in $\sigma_2(I_w)$   is preceded 
by  $w^{-1}$ in the sense that the natural coding of  $T^{-|w|}(z)$ 
begins with $w^{-1}$. To be more precise,  the convention  chosen for  the transformation $N$  corresponds to the induction
on   $I_{w^{-1}} \cup \sigma_2(I_{w^{-1}})$, such  as shown with the following lemma.
 Recall that the notation  $\rho_X$  stands for the return time to $X$.
\begin{lemma} \label{lem:return_word_code_induced_map}
Let $T$ be a linear involution with no connection and $w$ a nonempty word in its  natural coding  ${\mathcal L } (T)$.
Let $K_w = I_{w^{-1}} \cup \sigma_2(I_{w^{-1}})$. Then the set of  mixed first  return words to
$w$ are exactly the prefixes of length $\rho_{K_w}(z)$ of the infinite
natural coding of points  $z\in K_w$.
\end{lemma}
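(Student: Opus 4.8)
The plan is to carry everything over to the \emph{biinfinite} natural coding and to describe $K_w$ dynamically. First I would record the following facts. By Proposition~\ref{prop:inverse}, $I_{w^{-1}}=\sigma_1T^{|w|-1}(I_w)$, hence $\sigma_2(I_{w^{-1}})=\sigma_2\sigma_1T^{|w|-1}(I_w)=T^{|w|}(I_w)$, so $K_w=I_{w^{-1}}\sqcup\sigma_2(I_{w^{-1}})$ is a disjoint union of two nonempty open intervals lying in the two different components of $\hat I$. By Proposition~\ref{propBL} (minimality, resp.\ minimality on each component, together with finiteness of return times to an interval of positive length), $\rho_{K_w}(z)$ is finite for every $z\in\hat I\setminus\hat O$; and for such $z$ the whole orbit $(T^nz)_{n\in\Z}$ is defined, so $z$ has a biinfinite coding $y=\cdots y_{-1}y_0y_1\cdots$ with $y_ny_{n+1}\cdots=\Sigma_T(T^nz)$, all of whose factors lie in $S={\mathcal L}(T)$ and are therefore reduced. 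From $T^nz\in I_{w^{-1}}\iff y_n\cdots y_{n+|w|-1}=w^{-1}$ and $T^nz\in\sigma_2(I_{w^{-1}})\iff T^{n-|w|}z\in I_w\iff y_{n-|w|}\cdots y_{n-1}=w$ one obtains the dictionary used throughout: \emph{$T^nz\in K_w$ iff position $n$ is the end of an occurrence of $w$ in $y$ or the start of an occurrence of $w^{-1}$ in $y$}; call such an $n$ \emph{marked}. In particular, if $z\in K_w$ then $0$ is marked and $\rho_{K_w}(z)$ is the next marked position. I would also isolate the one combinatorial ingredient: in a reduced word an occurrence of $w$ and an occurrence of $w^{-1}$ cannot overlap, since the overlapping block would be a nonempty reduced word equal to its own inverse, which is impossible (look at its middle letter).

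For the inclusion of $\MR_S(w)$ in the set of prefixes described in the statement, I would take a complete first return word $v$ to $\{w,w^{-1}\}$, with proper prefix $\alpha\in\{w,w^{-1}\}$ and proper suffix $\beta\in\{w,w^{-1}\}$. Since $v\in S$, I can realize $v$ as a factor of a biinfinite coding $y$ of some $z\in\hat I\setminus\hat O$, occupying positions $[0,|v|)$, with $\alpha$ on $[0,|w|)$ and $\beta$ on $[|v|-|w|,|v|)$. Let $m$ be the marked position produced by $\alpha$ ($m=0$ if $\alpha=w^{-1}$, $m=|w|$ if $\alpha=w$) and $m'$ the one produced by $\beta$ ($m'=|v|-|w|$ if $\beta=w^{-1}$, $m'=|v|$ if $\beta=w$); using $|v|>|w|$ and the no-overlap fact (in the case $\alpha=w,\beta=w^{-1}$), one checks $0\le m<m'\le|v|$. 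Put $\zeta:=T^mz\in K_w\setminus\hat O$. I then claim $\rho_{K_w}(\zeta)=m'-m$: since $m'$ is marked, it suffices to exclude a marked $n$ with $m<n<m'$; such an $n$ comes from an occurrence of $w$ or $w^{-1}$ in $y$ which, because $v$ has no internal factor in $\{w,w^{-1}\}$ and because $w,w^{-1}$ cannot overlap $\alpha$ or $\beta$, is forced to be an internal occurrence inside $v$, a contradiction. Consequently the prefix of length $m'-m$ of $\Sigma_T(\zeta)$ is the block $y_m\cdots y_{m'-1}$, which is exactly $N(v)$ (erasing a prefix $w$ moves the left end of $v$ from the start of $\alpha$ to $m$; erasing a suffix $w^{-1}$ moves the right end from the end of $\beta$ to $m'$; in the other two cases the relevant end already sits at $m$, resp.\ $m'$). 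Hence $N(v)$ has the required form.

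For the reverse inclusion I would start from $z\in K_w\setminus\hat O$, set $\rho:=\rho_{K_w}(z)<\infty$, and let $u$ be the prefix of length $\rho$ of $\Sigma_T(z)$. In the biinfinite coding $y$ of $z$, positions $0$ and $\rho$ are consecutive marked positions. Let $\alpha$ be the element of $\{w,w^{-1}\}$ responsible for the mark at $0$ — namely $\alpha=w^{-1}$ occupying $[0,|w|)$ if $z\in I_{w^{-1}}$, or $\alpha=w$ occupying $[-|w|,0)$ if $z\in\sigma_2(I_{w^{-1}})$, the two cases being exclusive since $I_{w^{-1}}$ and $\sigma_2(I_{w^{-1}})$ are disjoint — and let $\beta$ be the one responsible for the mark at $\rho$. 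Let $v$ be the factor of $y$ running from the start of $\alpha$ to the end of $\beta$. Then $v\in S$; $v$ has $\alpha$ as a proper prefix and $\beta$ as a proper suffix, the only tight verification being $\rho>|w|$ in the case $\alpha=w^{-1},\beta=w$, which again follows from the impossibility of $w$ and $w^{-1}$ overlapping; and $v$ has no internal factor in $\{w,w^{-1}\}$, since such a factor would be an occurrence of $w$ or $w^{-1}$ strictly inside $v$, hence a marked position strictly between $0$ and $\rho$, contradicting $\rho=\rho_{K_w}(z)$ (the configurations where the suspect occurrence protrudes past an endpoint of $v$ are once more excluded by the no-overlap fact). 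Thus $v$ is a complete first return word to $\{w,w^{-1}\}$, and by construction $N(v)=y_0\cdots y_{\rho-1}=u$, so $u\in\MR_S(w)$.

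The main obstacle, in both directions, is exactly this verification that no occurrence of $w$ or $w^{-1}$ — equivalently, no return to $K_w$ — slips in strictly between the two relevant marked positions. The delicate cases are those where a suspect occurrence overlaps, or protrudes beyond, an endpoint of $v$ (or where the prefix and suffix of $v$ overlap each other when $v$ is short); every one of these is dispatched by the single observation that a nonempty reduced word cannot equal its own inverse, so that $w$ and $w^{-1}$ never overlap. Everything else is routine bookkeeping of positions in $y$.
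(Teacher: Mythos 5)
Your proof is correct and follows essentially the same route as the paper's: the dictionary between visits to $K_w$ and occurrences of $w$ (ending) or $w^{-1}$ (starting) in the coding, via $\sigma_2(I_{w^{-1}})=T^{|w|}(I_w)$, is exactly the paper's mechanism, recast as ``marked positions'' in a biinfinite coding. You are in fact more explicit than the paper about the correspondence between \emph{first} returns and the absence of internal occurrences of $w$ or $w^{-1}$, and about why $w$ and $w^{-1}$ cannot overlap.
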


\begin{proof}
Let  $u$ be the prefix of length $\rho_{K_w}(z)$ of $\Sigma_T(z)$
for  some $z\in K_w$.
Let us first recall  that $\sigma_2(I_{w^{-1}}) = T^{|w|}(I_{w})$ (Proposition \ref{prop:inverse}).
Assume first that the length of $u$ is larger than  or equal to the length of $w$. If $z \in I_{w^{-1}}$, then
$u$ starts with $w^{-1}$ while if $z \in \sigma_2(I_{w^{-1}})$ then $w u$ is in ${\mathcal L } (T)$.
Similarly, if $T^{|u|}(z) \in I_{w^{-1}}$ then $u w^{-1}$ is in ${\mathcal L } (T)$ while if $T^{|u|}(z) \in \sigma_2(I_{w^{-1}})$ then
$u$ ends with $w$. In all four possible cases, $u$, $w u$, $u w^{-1}$ and
$w u w ^{-1}$ are in ${\mathcal L } (T)$.

Let
\[
p = \left\{ \begin{array}{ll}
\epsilon & \text{if $z \in I_{w^{-1}}$},\\
  w & \text{if $z \in \sigma_2( I_{w^{-1}})$},
\end{array} \right.
\qquad
\text{and}
\qquad
s = \left \{ \begin{array}{ll}
w ^{-1}& \text{if $T^{|u|}(z) \in I_{w^{-1}}$}, \\
\epsilon & \text{if $T^{|u|}(z) \in  \sigma_2(I_{w^{-1}})$.}
\end{array} \right.
\]
Since $I_{w^{-1}}$ and $\sigma_2(I_{w^{-1}})$  are included into two distinct components, there is no cancellation in the product $p u s$. Moreover, $|p u s| \geq |u|$ and
hence $p u s$ starts and ends with an occurrence of $w$ or $w^{-1}$. It is thus  a complete
return word to $\{w,w^{-1}\}$. Furthermore one has $N(pus)=u$.

\smallskip

Let conversely $u$ be a mixed first  return word to $w$ and let $u'$ be the  complete first   
return word such that $u=N(u')$.  Write $u'=pus$. 
Assume first that $u'=wu$. Then $wu$
ends with $w$. For any point $y\in I_{u'}$, set $x=T^{|w]}(y)$.
Then  $x \in T^{|w]}  I_{w^{}}=  \sigma_2(I_{w^{-1}})$,  $x\in I_u$, and thus    $ T^{|u|} x  \in   \sigma_2(I_{w^{-1}})$ and $\rho_{K_w}(x)=|w|$.
Hence $u$ is the prefix of length $\rho_{J_w}(x)$ of $\Sigma_T(x)$.
The proof in the three other cases is similar.
\end{proof}

We end this section by introducing a further variation around return words, adapted to subgroups  of the free group (the  interest of this notion will be highlighted by  Theorem \ref{theoremGroupCode} below).
\begin{definition}[Prime words]
Let $G$
be a subgroup of the free group $F_A$. Let $S$ be a  laminary  set.
The \emph{prime words}  in $S$ with respect to $G$  are the 
nonempty words
in $G \cap S$  without a proper nonempty prefix  in $G \cap S$.
\end{definition}

\begin{example}
Let $T$ be the linear involution of Example \ref{exampleInvolution3}.
The set of prime words with respect to the even group is the set $X\cup X^{-1}$
where $X$ is as in Example~\ref{exampleEvenOdd}.
\end{example}

\section{Return words and  fundamental group}\label{sec:return}
%%%%%%%%%%%%%%%%%%%%%%%%%
%\subsection{The free groups as }\label{subsectionFundamentalGroup}

We now   interpret  the notions
of `return words' we have seen so far   (to a word, or with respect to  a subgroup via the notion of prime words)  
in geometrical terms.

We consider  a punctured surface $(X,\Sigma)$. Fixing a base point $x_0$, 
recall that the \emph{fundamental group} $\pi_1(X \backslash \Sigma, x_0)$ 
is the set of 
equivalence classes of loops in $X \backslash \Sigma$ based at $x_0$ 
up to homotopy. One ingredient of  our main  results    (Theorem \ref{theoremReturns} and Theorem \ref{theoremGroupCode})  is that with  each 
admissible interval  for  the foliation (in the sense  of Definition \ref{definition1})  is associated a \smc  of the fundamental group as we shall see below.
Furthermore, the fundamental group   is    a free group.

Let $(X,\Sigma,\FFF,\mu)$ be a measured foliation and assume that $\Sigma$ 
is nonempty. Let $I$ be an admissible interval and let $x_0$ be any point of $I$. By Lemma~\ref{lem:Poincare_map}, the domain $I \times \{0,1\}$ of the Poincar\'e map $T$
is cut into $2k$ subintervals by the first return map. With each subinterval $I_a$ we associate an element of $\pi_1(X \backslash \Sigma, x_0)$ as follows. Let $x$ be a point in that subinterval, we consider the loop $\gamma(x)$ which is the concatenation of
\begin{itemize}
\item the  segment in $I$ that joins $x_0$ to $x$,
\item the piece of leaf that joins $x$ to $x' = T(x)$,
\item the  segment in $I$ that joins $x'$ to $x_0$.
\end{itemize}
The homotopy class of $\gamma(x)$ only depends on the subinterval to which $x$ belongs. We let   $\Gamma(X,I,x_0)$ denote   the set of  equivalence classes of loops in $\pi_1(X \backslash \Sigma ,x_0)$ obtained by that process. The following lemma shows in
particular that there are $2k$ classes.

\begin{lemma} \label{lem:key_lemma}
Let $(X,\Sigma,\FFF,\mu)$ be a measured foliation with total angle $(2k-2) \pi$.
Then, if $\Sigma$ is nonempty, the fundamental group of $X \backslash \Sigma$ is a free group on $k$ generators.
Moreover for any admissible interval $I$ in $X$ and any $x_0\in I$, the set $\Gamma(X,I,x_0)$ is a \smc of $\pi_1(X \backslash \Sigma, x_0)$.
\end{lemma}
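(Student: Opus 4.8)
The plan is to prove the two assertions of Lemma~\ref{lem:key_lemma} more or less simultaneously, using the rectangle decomposition of the surface that already appeared in the proof of Lemma~\ref{lem:Poincare_map}. Recall that the Poincar\'e map $T$ partitions $I \times \{0,1\}$ into $2k$ subintervals $I_a$, and that for each $a \in A \cup A^{-1}$ the union of leaf segments from $I_a$ to $T(I_a)$ forms a rectangle $R_a$. Since $T = \sigma_2 \circ \sigma_1$ and $\sigma_1$ maps $I_a$ onto $I_{a^{-1}}$, the rectangles $R_a$ and $R_{a^{-1}}$ are two copies of the same flow box glued along $\sigma_1$; so one should really think of $k$ rectangles, one for each pair $\{a, a^{-1}\}$, which up to identifying $\hat{I}$-sheets tile $X$. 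First I would make this precise: the surface $X$ is obtained from $I \times [0,1]$ (a neighborhood of $I$) together with the $k$ rectangles, glued along their horizontal sides to $I$ and along their vertical sides to each other and to the singularities; equivalently $X$ deformation retracts onto the graph $\mathcal G$ consisting of the interval $I$ together with one arc $e_a$ through each rectangle pair, running from $x_0$-side to $x_0$-side. This graph has the homotopy type of a wedge of $k$ circles once $I$ is contracted to the point $x_0$, giving $\pi_1(X \backslash \Sigma, x_0) \cong F_k$; the generator corresponding to $e_a$ is exactly the loop $\gamma(x)$ for $x \in I_a$, i.e. an element of $\Gamma(X,I,x_0)$.

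The count of generators is where the hypothesis on the total angle enters, and I would verify it via Euler characteristic rather than by hand-waving about the retract. Cut $X$ open along $I$ and along the leaf segments joining the endpoints of $I$ to singularities; what remains is a union of rectangles (the $R_a$, suitably merged), i.e. a disc-like CW complex, and reassembling, one computes $\chi(X \backslash \Sigma)$ in terms of the number $2k$ of subintervals and the number $s$ of punctures. Combined with the identity $p_1 + \cdots + p_s + 2s + 2 = 2k$ already recorded in the excerpt (equivalently, total angle $(2k-2)\pi$), this yields $\chi(X \backslash \Sigma) = 1 - k$, and since $X \backslash \Sigma$ is an open surface with nonempty puncture set it is homotopy equivalent to a wedge of $1 - \chi = k$ circles, hence $\pi_1$ is free of rank $k$. (For the nonorientable case one checks the same Euler-characteristic bookkeeping goes through; an open surface, orientable or not, with free $\pi_1$ still has rank $1-\chi$, so nothing changes.)

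It then remains to show $\Gamma(X,I,x_0)$, which already has $2k$ elements $\{\gamma_a\}_{a \in A \cup A^{-1}}$, is a symmetric basis of $\pi_1(X \backslash \Sigma, x_0)$, i.e. that $\gamma_{a^{-1}} = \gamma_a^{-1}$ in $\pi_1$ and that $\{\gamma_a : a \in A\}$ is a basis. For the first point: the loop $\gamma_a$ runs from $x_0$ to $x \in I_a$ along $I$, then along the leaf to $T(x) = \sigma_2\sigma_1(x)$, then back to $x_0$ along $I$; the loop $\gamma_{a^{-1}}$ does the same starting from $\sigma_1(x) \in I_{a^{-1}}$, whose leaf segment is the reversal of the first one's up to the $\sigma_2$-sheet swap, which is homotopically trivial (the two sheets of $\hat I$ project to the same transversal in $X$). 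So $\gamma_{a^{-1}}$ is homotopic to $\gamma_a$ traversed backwards, giving $\gamma_{a^{-1}} = \gamma_a^{-1}$. For the basis property: pushing everything onto the spine $\mathcal G$ above, each $\gamma_a$ is the loop around the single arc $e_a$, and $\{e_a : a \in A\}$ is precisely a bouquet basis of $\pi_1(\mathcal G, x_0) = F_k$; transporting back through the deformation retract $X \backslash \Sigma \simeq \mathcal G$ finishes it.

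The main obstacle I anticipate is making the ``$X$ deformation retracts onto $\mathcal G$'' and ``the two sheets of $\hat I$ coincide in $X$'' claims genuinely rigorous in the nonorientable / higher-degree-singularity setting, where the local picture near a point of $\Sigma$ is the $z^p(dz)^2$ foliation with $p+2$ separatrices rather than a simple puncture; one must be careful that collapsing the rectangles and the interval does not change homotopy type near the singularities and that the retraction is compatible with removing $\Sigma$. The clean way around this is to avoid an explicit retraction and instead run the whole argument through the Euler characteristic computation plus van Kampen applied to the rectangle decomposition: express $X \backslash \Sigma$ as the union of the open rectangles and an interval-neighborhood of $I$, all contractible, with intersections that are disjoint unions of contractible pieces, and read off both the rank and an explicit generating set (the $\gamma_a$) directly from the resulting graph-of-groups / wedge presentation. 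That reduces everything to the combinatorial bookkeeping $p_1+\cdots+p_s+2s+2 = 2k$, which is already available.
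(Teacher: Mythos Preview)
Your approach is essentially the paper's: decompose $X\setminus\Sigma$ into the zippered rectangles coming from the Poincar\'e map on $I$, deformation retract each rectangle onto the loop passing through it, and read off that the resulting rose of $k$ circles has $\pi_1\cong F_k$ with the $\gamma_a$ as the petal generators. The paper's proof is terser---it does not spell out the Euler-characteristic cross-check, the explicit verification that $\gamma_{a^{-1}}=\gamma_a^{-1}$, or the van~Kampen alternative you propose as a fallback---but the underlying geometric argument is the same.
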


\begin{proof}
Let $I$ be an admissible interval. We consider the $k$ loops obtained from the above construction. With an homotopy fixing $x_0$, one can easily realize the loops in such way that the only common point between any two is $x_0$. We let  $Y \subset X \backslash \Sigma$ denote   this set of $d$ loops. Now we show that the punctured surface $X \backslash \Sigma$ is homotopic to $Y$. We may decompose the surface $X \backslash \Sigma$ into zippered rectangles as in Lemmas~\ref{lem:Poincare_map} and~\ref{lem:existence_suspension}: we cut the surface along each singular leaf, from the singularities until the first time it hits the interior of $I$. In each rectangle there is exactly one loop passing through. It is easy to see that by a continuous deformation we can shrink each rectangle to that loop. In other words we build a homotopy to $Y$.

Now $Y$ is a connected sum of $k$ loops (also called a rose) and its fundamental group is a free group of rank $k$ generated by each curve that goes once through a loop.
\end{proof}

\subsection{Return words and bases of the free group}
We now have gathered all what  was needed to   deduce  algebraic properties of mixed first   return words. 

Let $T:I\times\{0,1\}\rightarrow I\times\{0,1\} $ be a linear involution on $A$
and let $S={\mathcal L } (T)$. 
We have  introduced with Definition~\ref{definition1} the notion of  an admissible interval
$I\subset X$ with respect to a measured foliation $(X,\Sigma,\FFF,\mu)$.
We can formulate directly a similar definition for an open interval $J\subset I$
with respect to a linear involution $T$ defined on $I$  as follows.
\begin{definition}[Admissible  interval]
Let $T$ be a linear involution  without connection  defined on the interval $I$.
 The open interval $J=]u,v[$ with  $J \subset I$ 
is {\em admissible} with respect to $T$ if for each of its two endpoints $x=u,v$, there is 
\begin{enumerate}
\item[(i)] either  a singularity $z$ of $T^{-1}$ such that $x=T^n(z)$
and $T^k(z)\notin J$ for $0\le k\le n$,
\item[(ii)] or   a singularity $z$ of $T$ such that $z=T^{n}(x)$
and $T^k(x)\notin J$ for $0\le k\le n$.
\end{enumerate}
\end{definition}
The term `admissible' was introduced originally by G. Rauzy \cite{Rauzy1979} for interval exchanges.

It is clear that if $J$ is admissible with respect to $T$, then it is
admissible with respect to any suspension $(\FFF,\mu,I)$ of $T$.
Hence,  for any  admissible interval of $I$  with respect to $T$,
 the transformation induced on $I$  is a
$k$-linear involution  without connection, according to Lemma \ref{lem:Poincare_map}.
Furthermore,  for any  admissible interval
of $I$, the Poincar\'e map  of the foliation  is the  Poincar\'e map of the linear involution  on the   union  $I\cup\sigma_2(I)$.

The following result  is proved in~\cite{DolcePerrin2015} for interval exchange
transformations. The proof for linear involutions is
the same. Recall that the   intervals $I_w$, $w\in S$,   are  defined in Section \ref{sectionNatural}.
\begin{proposition}\label{propIwAdmissible}
Let $T$ be a linear involution without connection on $I$. The interval $I_w$, seen as a subinterval of $I$,
is admissible with respect to $T$.
\end{proposition}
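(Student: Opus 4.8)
The plan is to show that both endpoints of $I_w$ satisfy one of the two alternatives (i) or (ii) in the definition of admissibility. Recall from Section~\ref{sectionNatural} that $I_w = I_{a_0} \cap T^{-1}(I_{a_1}) \cap \cdots \cap T^{-m+1}(I_{a_{m-1}})$ for $w = a_0 \cdots a_{m-1}$, and that each $I_u$ is an open interval. The key observation is that the boundary of $I_w$ inside $I$ is built from boundary points of the intervals $T^{-j}(I_{a_j})$, and the boundary of $T^{-j}(I_{a_j})$ consists of points of the form $T^{-j}(z)$ where $z$ is a division point, i.e. a singularity of $T$, or a preimage of a singularity of $T^{-1}$. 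So first I would set up the induction on $|w| = m$: the base case $m=1$ is immediate since $I_w = I_a$ has as endpoints division points in $\Sigma$, which are by definition singularities of $T$ (alternative (ii) with $n=0$), and they are also $T^0$ of singularities of $T^{-1}$ up to the mirror issue — more carefully, each division point is a singularity of $T$, giving (ii) with $n=0$.

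For the inductive step, I would write $w = a u$ with $a \in A \cup A^{-1}$ and use the identity $I_{au} = I_a \cap T^{-1}(I_u)$ from Equation~\eqref{equationI}. An endpoint $x$ of $I_{au}$ is then either an endpoint of $I_a$ (a division point, so a singularity of $T$, satisfying (ii) with $n=0$ provided the orbit doesn't re-enter $I_{au}$, which holds because $I_{au} \subset I_a$ and the points $T^k(x)$ for small $k$ lie outside), or it is an endpoint of $T^{-1}(I_u)$, i.e. $x = T^{-1}(y)$ where $y$ is an endpoint of $I_u$. By the induction hypothesis applied to $u$, the point $y$ satisfies (i) or (ii) with respect to $T$ on the sub-interval $I_u$; I would then transport this one step backwards under $T$ to conclude the corresponding property for $x$ with respect to $I_{au}$. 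In case (i) for $y$ — there is a singularity $z$ of $T^{-1}$ with $y = T^n(z)$ and $T^k(z) \notin I_u$ for $0 \le k \le n$ — then $x = T^{-1}(y) = T^{n-1}(z)$ (if $n \ge 1$), or if $n = 0$ we get $x = T^{-1}(z)$ and must argue $T^{-1}(z)$ relates to a singularity appropriately. The bookkeeping here is the delicate point: one must be careful that the orbit segments avoid the smaller interval $I_{au}$, not just the larger $I_u$, and that the no-connection hypothesis prevents orbits of singularities from colliding, so the exponents $n$ stay finite and well-defined.

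The main obstacle I expect is precisely this careful tracking of which singularity (of $T$ versus $T^{-1}$) is associated to each endpoint and verifying the "avoidance" conditions $T^k(z) \notin J$ through the induction, especially handling the interaction between intersecting with $I_a$ and pulling back by $T^{-1}$, and the boundary cases $n=0$ where the two alternatives (i) and (ii) nearly coincide. Since the statement asserts the proof is the same as in \cite{DolcePerrin2015} for interval exchanges, I would verify that no step of that argument used orientability or the absence of the flip/involution structure: the only ingredients are the combinatorial identity \eqref{equationI}, the fact that the $I_u$ are intervals (Proposition in Section~\ref{sectionNatural}), the definition of singularities as division points and their orbits, and the no-connection hypothesis guaranteeing orbits of singular points do not meet other singular points — all of which hold verbatim for linear involutions without connection. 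Hence the induction goes through unchanged, and $I_w$ is admissible with respect to $T$.
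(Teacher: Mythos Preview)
The paper gives no proof of this proposition at all: it simply states that the result is proved in \cite{DolcePerrin2015} for interval exchange transformations and that ``the proof for linear involutions is the same.'' Your proposal is therefore strictly more detailed than the paper's treatment, and your final paragraph---checking that the ingredients used in \cite{DolcePerrin2015} (the recursion $I_{au}=I_a\cap T^{-1}(I_u)$, the fact that each $I_u$ is an interval, and the no-connection hypothesis) hold verbatim for linear involutions---is exactly the verification the paper is implicitly asking the reader to perform. In that sense your approach and the paper's coincide.

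One small caution on your base case: not every endpoint of $I_a$ is a division point in $\Sigma$; the extreme intervals $I_{\pi(1)}$, $I_{\pi(\ell)}$, $I_{\pi(\ell+1)}$, $I_{\pi(2k)}$ have one endpoint equal to an endpoint of $I$ itself. In the suspension these lie on singular leaves by construction (this is part of what makes $I$ admissible in the sense of Definition~\ref{definition1}), so the admissibility condition is still satisfied, but strictly speaking the combinatorial definition given just before Proposition~\ref{propIwAdmissible} needs the endpoints of $I$ to count as singularities; this is a convention you would need to make explicit. Apart from this, your inductive bookkeeping (tracing an endpoint of $I_{au}$ either to a division point of $I_a$ or, via one application of $T$, to an endpoint of $I_u$ handled by induction) is the correct structure, and the ``avoidance'' conditions $T^k(\cdot)\notin J$ do transfer because $I_{au}\subset I_a$ and $T(I_{au})\subset I_u$.
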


We now can state our  main result concerning return words.

 \begin{theorem}\label{theoremReturns}
Let $S$ be the natural coding of a   linear involution without connection on the alphabet $A$. 
For any $w\in S$, the set of  mixed  first return words to $w$ is a symmetric basis
of $F_A$.
\end{theorem}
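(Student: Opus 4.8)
The plan is to transfer the combinatorial statement about mixed first return words into a topological statement about the fundamental group of a punctured surface, using the suspension machinery from Section~\ref{sectionGeometric}. By Lemma~\ref{lem:existence_suspension}, fix a suspension $(\FFF,\mu,I)$ of $T$, so that $X\backslash\Sigma$ is a punctured surface whose total angle is $(2k-2)\pi$ with $k=\Card(A)$, and by Lemma~\ref{lem:key_lemma} its fundamental group $\pi_1(X\backslash\Sigma,x_0)$ is free of rank $k$, with $\Gamma(X,I,x_0)$ a symmetric basis in bijection with $A\cup A^{-1}$ via the Poincar\'e coding. This identifies $F_A$ with $\pi_1(X\backslash\Sigma,x_0)$ in such a way that the natural coding word of a loop built from a leaf segment is exactly the element it represents in the free group.

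The key step is to realize the mixed first return words to $w$ as the loop-classes associated to a \emph{different} admissible interval, namely (a subinterval suspension of) $K_w=I_{w^{-1}}\cup\sigma_2(I_{w^{-1}})$. First I would invoke Proposition~\ref{propIwAdmissible}: $I_{w^{-1}}$ is admissible with respect to $T$, hence (by the remark following the admissibility definition) it is admissible with respect to the suspension, and the Poincar\'e map of $\FFF$ on the admissible interval $I_{w^{-1}}$ is the induced linear involution on $K_w=I_{w^{-1}}\cup\sigma_2(I_{w^{-1}})$. Applying Lemma~\ref{lem:key_lemma} to this new admissible interval $J:=I_{w^{-1}}$ gives that $\Gamma(X,J,x_0')$ is again a symmetric basis of $\pi_1(X\backslash\Sigma,x_0')=F_A$. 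It then remains to check that the natural-coding label of the loop associated to a subinterval of $J$ under the induced Poincar\'e map is precisely a mixed first return word $N(u)$ to $w$: this is exactly the content of Lemma~\ref{lem:return_word_code_induced_map}, which says the prefixes of length $\rho_{K_w}(z)$ of the natural codings of points $z\in K_w$ are the mixed first return words to $w$. Since a loop from $J$ through a leaf segment of the foliation back to $J$, read off in the coding, spells out the natural coding of that point up to its first return to $K_w$, the symmetric basis $\Gamma(X,J,x_0')$ is carried onto $\MR_S(w)$, and the theorem follows.

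The main obstacle I expect is a bookkeeping/orientation issue: making precise the statement ``the label of the induced loop is $N(u)$ rather than $u$''. The induced Poincar\'e map lives on $K_w=I_{w^{-1}}\cup\sigma_2(I_{w^{-1}})$, whose two halves lie in the two distinct components of the ambient $\hat I$-structure; one must track carefully, via the definitions of $p$ and $s$ in the proof of Lemma~\ref{lem:return_word_code_induced_map}, which of the four cases ($u$, $wu$, $uw^{-1}$, $wuw^{-1}$) applies, and verify that in each the resulting element of $F_A$ read along the leaf is the reduced word $N(u)$ — and that these are genuinely the generators coming from $\Gamma(X,J,x_0')$, with no hidden cancellation (there is none, since $I_{w^{-1}}$ and $\sigma_2(I_{w^{-1}})$ sit in distinct components). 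Once this identification of the generating set with $\MR_S(w)$ is nailed down, freeness and the count ($\Card(\MR_S(w))=2k$, matching $\Card(\Gamma)=2k$) are automatic from Lemma~\ref{lem:key_lemma}, and symmetry of $\MR_S(w)$ was already noted after Definition~\ref{def:mixed}.
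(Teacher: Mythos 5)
Your proposal is correct and follows essentially the same route as the paper's proof: suspension via Lemma~\ref{lem:existence_suspension}, admissibility of the subinterval (Proposition~\ref{propIwAdmissible}), the symmetric-basis property of the associated loop classes (Lemma~\ref{lem:key_lemma}), and the identification of those loops' coding labels with the mixed first return words (Lemma~\ref{lem:return_word_code_induced_map}). The paper's own proof is precisely this chain of lemmas, stated more tersely, and your extra care about the $I_{w^{-1}}$ versus $I_w$ bookkeeping matches the conventions of Lemma~\ref{lem:return_word_code_induced_map}.
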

\begin{proofof}{of Theorem~\ref{theoremReturns}}
Let $T$ be a 
linear involution without connection
on the alphabet $A$.
By Lemma~\ref{lem:existence_suspension}, there exists a measured foliation $(X,\Sigma,\FFF,\mu)$ and an admissible interval $I\subset X$ such that $T$ is conjugate to the Poincar\'e map of $\FFF$ on $I$.
Let $w$ be a nonempty word of the natural coding $S={\mathcal L } (T)$. 
By Proposition~\ref{propIwAdmissible}, the subinterval $I_w$ is    admissible    for  the linear involution $T$. 
Let $x_0$ be a point in $I_w$.
We have a natural identification $F_A \rightarrow \pi_1(X \backslash \Sigma, x_0)$ given by Lemma~\ref{lem:key_lemma}. Since $I_w$ is admissible, using Lemma~\ref{lem:return_word_code_induced_map}, the same construction provides an identification of the subgroup generated by the mixed  first  return words,
$\Gamma(X,I_w,x_0)$ and $\pi_1(X \backslash \Sigma, x_0)$. 
This shows that the set of  mixed  first return words is a \smc of $F_A$.
\end{proofof}

Theorem \ref{theoremReturns} thus  provides bases of the free group  within a given natural coding 
by taking    mixed  first return  words with respect to a given factor $w$.

\begin{example}
The set   of  $\MR_S(c)$ in Example \ref{exampleInvolution3bis} provides a  symmetric basis of the free group, whereas 
$\CR_S(c)$   is  not a symmetric basis of the free group.

\end{example}

One  also deduces the following  cardinality result, which is the counterpart for linear involutions of Theorem 3.6
in~\cite{BertheDeFeliceDolceLeroyPerrinReutenauerRindone2013a},  that holds for tree sets,  by  noticing that the set of mixed first  return words $\MR_S(w)$ has the same cardinality as  the set of
 complete first  return words $\CR_S(w)$.

\begin{corollary} Let $T$ be a 
linear involution without connection
on the alphabet $A$. For any $w\in {\mathcal L } (T)$, the set of  complete  first 
return  words to $\{w,w^{-1}\}$ has $2 \Card(A)$ elements. 
\end{corollary}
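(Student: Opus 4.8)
The plan is to deduce the corollary directly from Theorem~\ref{theoremReturns} together with the bijection $N$ between complete first return words to $\{w,w^{-1}\}$ and mixed first return words. First I would recall from Theorem~\ref{theoremReturns} that $\MR_S(w)$ is a symmetric basis of $F_A$; since $A$ has $\Card(A)$ letters, a symmetric basis of $F_A$ has exactly $2\Card(A)$ elements (this is the $d=1$, $k=\Card(A)$ case of the formula $\Card(Y)=2d(k-1)+2$ recalled right after the definition of symmetric basis). Hence $\Card(\MR_S(w))=2\Card(A)$.

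The remaining point is that $\Card(\CR_S(w))=\Card(\MR_S(w))$, i.e.\ the map $u\mapsto N(u)$ from $\CR_S(w)$ to $\MR_S(w)$ is a bijection. Surjectivity is immediate from Definition~\ref{def:mixed}, since $\MR_S(w)$ is by definition $\{N(u)\mid u\in\CR_S(w)\}$. For injectivity I would argue as follows: given $u\in\CR_S(w)$, the word $u$ either begins with $w$ or not, and it ends with $w^{-1}$ or not; in each of the four cases $N(u)$ is obtained from $u$ by deleting a prefix $w$ and/or a suffix $w^{-1}$, and conversely $u$ is recovered from $N(u)=:v$ by the rule described in the proof of Lemma~\ref{lem:return_word_code_induced_map}: one puts back $w$ in front exactly when the relevant endpoint condition holds. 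More cleanly, I can invoke Lemma~\ref{lem:return_word_code_induced_map} itself: the mixed first return words to $w$ are exactly the prefixes of length $\rho_{K_w}(z)$ of $\Sigma_T(z)$ for $z\in K_w=I_{w^{-1}}\cup\sigma_2(I_{w^{-1}})$, and the four sub-cases there (according to which component of $K_w$ contains $z$ and which contains $T^{|u|}(z)$) are mutually exclusive and determine the prefix/suffix that must be prepended/appended to $v$ to recover the unique complete first return word $u$ with $N(u)=v$. This gives a well-defined inverse map $\MR_S(w)\to\CR_S(w)$, so $N$ is a bijection and $\Card(\CR_S(w))=\Card(\MR_S(w))=2\Card(A)$.

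I do not expect a serious obstacle here, as the corollary is essentially a bookkeeping consequence of Theorem~\ref{theoremReturns} and the already-established correspondence between complete and mixed first return words. The one subtlety worth stating carefully is that $N$ is genuinely injective and not merely surjective: a priori two distinct complete first return words $u_1\neq u_2$ could have $N(u_1)=N(u_2)$ if, say, $u_1=wv$ and $u_2=v$ were both complete first return words with the same middle $v$. The point ruling this out is precisely that whether or not $w$ (resp.\ $w^{-1}$) must appear as the prefix (resp.\ suffix) is forced by the geometric data — the component of $\hat I$ in which the relevant endpoint of $I_v$ lies — as spelled out in Lemma~\ref{lem:return_word_code_induced_map}; so the erased prefix and suffix are uniquely reconstructible from $v$, and $N$ restricted to $\CR_S(w)$ is injective. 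This is the step I would write out, everything else being a direct citation of Theorem~\ref{theoremReturns} and Schreier's formula.
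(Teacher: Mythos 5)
Your proof is correct and follows essentially the same route as the paper, which deduces the corollary from Theorem~\ref{theoremReturns} by observing that $\MR_S(w)$ and $\CR_S(w)$ have the same cardinality. Your careful justification that $N$ is injective (the erased prefix and suffix being forced by which component of $K_w$ contains $I_v$ and $T^{|v|}(I_v)$, as in Lemma~\ref{lem:return_word_code_induced_map}) is a detail the paper leaves implicit, and it is the right one.
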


\subsection{Prime words and coverings}\label{subsec:coverings}

We now prove   an  analogue of  Theorem \ref{theoremReturns} for prime words with respect to a  subgroup of the free group. This will be Theorem \ref{theoremGroupCode}  below.  We will first  consider  surface coverings that are in correspondence with subgroups of $\pi_1(X \backslash \Sigma)$. From this correspondence, we will  obtain  a  proof of Theorem~\ref{theoremGroupCode}. 

Let us first  quickly recall the Galois correspondence of coverings.
Let $X$ be a compact connected surface and $\Sigma$ a finite set of points.
A \emph{covering} of $X$ of degree $d$ is a compact connected surface $Y$ with a continuous map $f: Y \rightarrow X$ such that for each $x \in X \backslash \Sigma$ there exists a connected neighborhood $U$ of $x$ such that $f^{-1}(U)$ is a disjoint union of $d$ open sets $f^{-1}(U) = U_1 \cup U_2 \cup \ldots \cup U_d$ such that for each $i \in \{1,\ldots,d\}$, $f: U_i \rightarrow U$ is a homeomorphism. In our case, we consider more generally a \emph{ramified covering} with ramifications contained in $\Sigma$. For points $x \in X \backslash \Sigma$ we keep the same condition, but for points $x \in \Sigma$ we allow the preimage to be a union of $m \leq d$ open sets $U_1 \cup U_2 \cup \ldots U_m$ such that $f$ restricted to $U_i$ is of the form $z \mapsto z^{p_i}$ for some $p_i \geq 0$ from the unit disc in $\CC$ to itself. One can show that $p_1 + p_2 + \ldots + p_m = d$. In other words, the degree is constant if we count multiplicities.

Two coverings $f:Y \rightarrow X$ and $f': Y' \rightarrow X$ are \emph{equivalent} if there exists an homeomorphism $g:Y \rightarrow Y'$ such that $f = f' \circ g$.

If $\gamma$ is a loop in $Y$ then $f(\gamma)$ is a loop in $X$. Hence, for any $y_0 \in Y$ we get a map $f_*: \pi_1(Y \backslash f^{-1}(\Sigma), y_0) \rightarrow \pi_1(X \backslash \Sigma, f(y_0))$. The map $f_*$ is injective and its image is of finite index in $\pi_1(X \backslash \Sigma, f(y_0))$.

The following result establishes a Galois correspondence between coverings of finite degree of $X$
ramified over $\Sigma$ and subgroups of $\pi_1(X\setminus\Sigma)$.
For a proof, see~\cite{Hatcher2002} or \cite{Forster1991}.
\begin{theorem} \label{thm:Galois_correspondence}
Let $X$ be a compact connected surface and let $\Sigma \subset X$ be a finite set.
Let $Y$ be   a covering of $X$ of degree $d$.
Then, the map $(f: Y \rightarrow X) \mapsto f_*(\pi_1(Y\setminus f^{-1}(\Sigma)))$ induces a bijection between
equivalence classes of coverings of degree $d$ ramified over $\Sigma$ and conjugacy
classes of subgroups of $\pi_1(X \backslash \Sigma)$ of index $d$.
\end{theorem}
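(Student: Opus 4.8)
The plan is to prove Theorem~\ref{thm:Galois_correspondence} by the standard machinery of covering space theory, adapted to the ramified setting. First I would remove the ramification from the picture: given a ramified covering $f \colon Y \to X$ of degree $d$ with ramifications over $\Sigma$, restrict it to obtain an honest (unramified) covering $f \colon Y \setminus f^{-1}(\Sigma) \to X \setminus \Sigma$. A local computation near each point of $\Sigma$ shows that $f^{-1}(\Sigma)$ is finite, that $Y \setminus f^{-1}(\Sigma)$ is connected (since $Y$ is connected and we are deleting finitely many points from a surface), and that the degree of this unramified covering is still $d$, because $\sum p_i = d$ counts exactly how many sheets come together at a ramification point, so away from $\Sigma$ there are $d$ sheets everywhere. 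Then I would invoke the classical Galois correspondence for unramified coverings of a connected, locally path-connected, semi-locally simply connected space: equivalence classes of connected coverings of degree $d$ of $X \setminus \Sigma$ correspond bijectively to conjugacy classes of index-$d$ subgroups of $\pi_1(X \setminus \Sigma)$, via $(f \colon Z \to X\setminus\Sigma) \mapsto f_*(\pi_1(Z))$.

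The substantive point to check — and the place where the ``ramified'' hypothesis does real work — is that this restriction-to-the-complement operation is a bijection between equivalence classes of \emph{ramified} coverings of $X$ (ramified over $\Sigma$) and equivalence classes of unramified coverings of $X \setminus \Sigma$. Surjectivity is the key step: given an unramified finite covering $Z \to X \setminus \Sigma$, one must fill in the punctures to produce a compact surface $Y \supseteq Z$ and extend $f$. This is done puncture by puncture: a small punctured disc $D^* = D \setminus \{x\}$ around $x \in \Sigma$ has $\pi_1(D^*) \cong \mathbb{Z}$, so the preimage of $D^*$ in $Z$ is a disjoint union of punctured discs, each mapping to $D^*$ by $z \mapsto z^{p_i}$ up to homeomorphism (the connected coverings of $D^*$ are classified by the index $p_i$ of the corresponding subgroup $p_i\mathbb{Z} \subseteq \mathbb{Z}$), and each such $z \mapsto z^{p_i}$ on $D^*$ extends continuously over the origin to $z \mapsto z^{p_i}$ on $D$. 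Gluing these discs back in gives the compact surface $Y$ and the ramified covering map, and a cardinality count on the fibre over $x$ gives $\sum_i p_i = d$. Injectivity is easier: a ramified covering is determined by its restriction to the complement of the branch locus, since the compactification described is canonical, so an equivalence $Z \to Z'$ over $X \setminus \Sigma$ extends uniquely to an equivalence $Y \to Y'$ over $X$ by continuity and density.

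Finally I would assemble the two bijections: ramified degree-$d$ coverings of $X$ over $\Sigma$ $\leftrightarrow$ unramified degree-$d$ coverings of $X \setminus \Sigma$ $\leftrightarrow$ conjugacy classes of index-$d$ subgroups of $\pi_1(X \setminus \Sigma)$, and observe that the composite sends $(f \colon Y \to X)$ to the conjugacy class of $f_*(\pi_1(Y \setminus f^{-1}(\Sigma)))$, which is exactly the map in the statement; the fact that $f_*$ is injective with image of finite index $d$ was recorded just before the theorem and is reconfirmed by this correspondence. I expect the main obstacle to be purely expository rather than mathematical: carefully setting up the local model near a ramification point and verifying that the fill-in procedure is well-defined, continuous, and independent of choices — but since the excerpt explicitly permits citing \cite{Hatcher2002} or \cite{Forster1991} for the proof, I would keep this step brief and refer to those sources for the surface-theoretic details of the compactification, spelling out only the classification of coverings of the punctured disc and the degree count $\sum p_i = d$.
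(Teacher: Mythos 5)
Your proposal is correct; note that the paper itself gives no proof of this theorem but simply refers to \cite{Hatcher2002} and \cite{Forster1991}, and your argument (restrict to an unramified covering of $X\setminus\Sigma$, apply the classical Galois correspondence, and show that filling in punctures via the local model $z\mapsto z^{p_i}$ gives a bijection between ramified coverings of $X$ over $\Sigma$ and unramified coverings of $X\setminus\Sigma$) is precisely the standard argument those references carry out. So you are taking essentially the same (deferred) route as the paper, just with the details made explicit.
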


\begin{example}\label{exampleEvenGroup}
Let $T$ be the linear involution  of Example~\ref{exampleInvolution3}.
It is  without connection and  nonorientable,  the group of even words is thus  a subgroup of index $d=2$.
The covering of degree $2$ of its suspension
 associated with the group of even words  is the orientation covering
 of the foliation. 
 
\begin{figure}[hbt]
\begin{center}\includegraphics{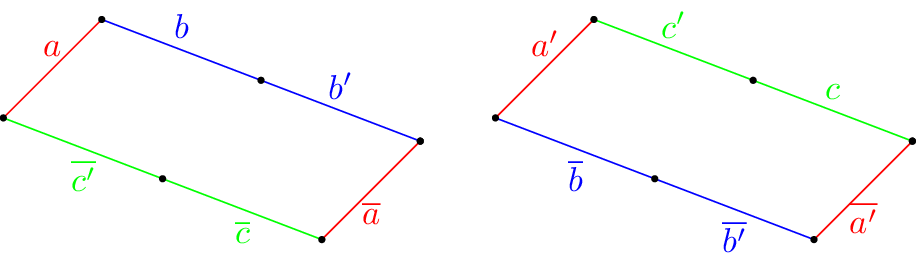}
%\begin{center}\includegraphics{suspension2.pdf}
\end{center}
\caption{The orientation covering of the suspension of Figure~\ref{fig:suspension}. The choice of letters is made in order that only positive letters or negative letters appear in the coding of an orbit.}
\label{fig:suspension_orientation}
\end{figure}

One can see on Figure~\ref{fig:suspension_orientation} that the obtained foliation is orientable. 
The result is actually a torus and its coding  yields Sturmian words.  Indeed, one  way to obtain   the orientation covering is to duplicate the alphabet and to work on $  (A \cup A') \cup (A\cup  A')^{-1}$. With each word are associated two  lifted words:
 the first one  is  obtained by replacing   the positive letters by elements of  $A$ and  negative letters by elements of  $A'$,  and
the second one   is obtained
by replacing  the positive  letters  by letters of   $(A')^{-1}$ and the negative ones by  elements of  $A^{-1}$.
The language  on $(A\cup A') \cup  (A\cup A')^{-1}$ that is obtained  in this way is  orientable.
As an illustration, the word    $c^{-1}  a  b^{-1 } c^{-1 } b  a^{-1 } c  $ belongs to  the natural coding  of  $T$   (see Figure \ref{fig:suspension}).
It admits  two  lifts that  code  orbits for the suspension  depicted  in   Figure \ref{fig:suspension_orientation}, namely
$  c'  a  b'  c'  b  a'  c' $
and
$ c^{-1}  (a')^{-1 } b^{-1}  c^{-1}  (b')^{-1}  a^{-1 } c^{-1 } .$
The word  $  c'  a  b'  c'  b  a'  c' $   belongs to the  natural coding  of   the interval exchange depicted below.  Even letters allow  one to stay in the same
half of this new interval exchange.

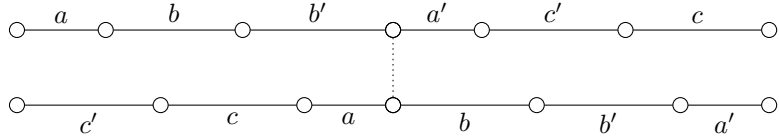
\begin{figure}[hbt]
\centering
\gasset{Nh=2,Nw=2,ExtNL=y,NLdist=2,AHnb=0,ELside=r}
\begin{picture}(100,10)
\node(0h)(0,10){}
\node(1h)(11.8,10){}
\node(2h)(30.0,10){}
\node(3h)(50,10){}
\drawedge[ELside=l](0h,1h){$a$}
\drawedge[ELside=l](1h,2h){$b$}
\drawedge[ELside=l](2h,3h){$b'$}

\node(0bx)(50,10){}
\node(1bx)(61.8,10){}
\node(2bx)(80.9,10){}
\node(3bx)(100,10){}
\drawedge[ELside=l](0bx,1bx){${a'}$}
\drawedge[ELside=l](1bx,2bx){${c'}$}
\drawedge[ELside=l](2bx,3bx){$c$}

\node(0b)(0,0){}
\node(1b)(19.1,0){}
\node(2b)(38.2,0){}
\node(3b)(50,0){}
\drawedge(0b,1b){${c'}$}
\drawedge(1b,2b){${c}$}
\drawedge(2b,3b){${a}$}

\node(0hx)(50,0){}
\node(1hx)(69.1,0){}
\node(2hx)(88.2,0){}
\node(3hx)(100,0){}
\drawedge(0hx,1hx){${b}$}
\drawedge(1hx,2hx){${b'}$}
\drawedge(2hx,3hx){${a'}$}

\drawedge[AHnb=0,dash={0.2 0.5}0](3h,3b){}
\end{picture}
\caption{Interval exchange corresponding  to the orientation covering.}
\end{figure}

\end{example}

The following statement gives a remarkable
property of the set of prime words with respect to a subgroup of finite index.

\begin{theorem}\label{theoremGroupCode}
Let $T$ be a  linear involution on $A$ without connection and let $S={\mathcal L } (T)$.
For any subgroup $G$ of finite index of the free group $F_A$, the set of  prime words in $S$ with respect to $G$
is a symmetric basis of $G$.
\end{theorem}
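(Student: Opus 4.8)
The plan is to mimic the proof of Theorem~\ref{theoremReturns}, replacing the base surface by an appropriate covering and invoking the Galois correspondence of Theorem~\ref{thm:Galois_correspondence}. First I would use Lemma~\ref{lem:existence_suspension} to realize $T$ as the Poincar\'e map of a measured foliation $(X,\Sigma,\FFF,\mu)$ on an admissible interval $I$, so that $F_A$ is identified with $\pi_1(X\setminus\Sigma,x_0)$ via Lemma~\ref{lem:key_lemma} for some $x_0\in I$. Given a subgroup $G$ of finite index $d$ in $F_A$, Theorem~\ref{thm:Galois_correspondence} furnishes a ramified covering $f\colon Y\to X$ of degree $d$, ramified over $\Sigma$, with $f_*(\pi_1(Y\setminus f^{-1}(\Sigma),y_0))=G$ (after choosing a basepoint $y_0$ above $x_0$). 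The foliation $\FFF$ and its transverse measure lift through $f$ to a measured foliation $(Y,\Sigma_Y,\widetilde\FFF,\widetilde\mu)$ with $\Sigma_Y=f^{-1}(\Sigma)$; since $f$ is a local isometry for the transverse measure away from $\Sigma$, and $\FFF$ has no connection, $\widetilde\FFF$ has no connection either (a connection upstairs would project to one downstairs). A computation of the total angle shows it is $(2k'-2)\pi$ for some $k'$, and Schreier's formula $d(k-1)+1=k'$ pins down the rank of $\pi_1(Y\setminus\Sigma_Y)$, consistent with Lemma~\ref{lem:key_lemma}.

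Next I would pull back the interval $I$ to $\widetilde I=f^{-1}(I)$. This is a disjoint union of $d$ lifts of $I$, each admissible for $\widetilde\FFF$; concatenating them in the order dictated by the monodromy action of $\pi_1(X\setminus\Sigma,x_0)$ on the fiber gives a single admissible interval $\widehat I$ for $\widetilde\FFF$, on which the Poincar\'e map $\widetilde T$ is a $k'$-linear involution without connection. The key combinatorial point is that the natural coding of $\widetilde T$ on $\widehat I$, read back in $F_A$ through $f_*$, is exactly the set of words of $S$ that can be "lifted through $G$" — more precisely, the coding of an orbit of $\widetilde T$ corresponds to a path in $Y$ whose projection is a path in $X$, and a word $u\in S$ is a first-return loop to one of the $d$ copies of $I$ in $\widehat I$ precisely when $u\in G\cap S$ and $u$ has no proper nonempty prefix in $G\cap S$, i.e.\ $u$ is a prime word with respect to $G$. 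This is where the hypothesis that $G$ has finite index is essential: it guarantees $Y$ is compact and the covering has finite degree, so $\widehat I$ is a genuine admissible interval of finite length. I would then apply Lemma~\ref{lem:key_lemma} to $(Y,\Sigma_Y,\widetilde\FFF,\widetilde\mu)$ and the admissible interval $\widehat I$: the set $\Gamma(Y,\widehat I,y_0)$ is a symmetric basis of $\pi_1(Y\setminus\Sigma_Y,y_0)$, and under $f_*$ this set is carried to the set of prime words of $S$ with respect to $G$, which is therefore a symmetric basis of $G$.

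The main obstacle I anticipate is the bookkeeping in the middle step: showing cleanly that the first-return words of $\widetilde T$ to a single component of $f^{-1}(I)$ correspond, under the identification with $F_A$, exactly to the prime words of $S$ with respect to $G$, and not to some larger or smaller set. One has to check (i) that a loop in $Y$ based at $y_0$ staying inside one sheet of $\widehat I$ until it returns projects to a word lying in $G$, (ii) that such a word has no proper nonempty prefix in $G\cap S$ — this uses that the intermediate crossings of $\widehat I$ land in other sheets, hence correspond to group elements not in $G$ — and (iii) conversely that every prime word arises this way, using that $\widetilde\FFF$ has no connection so that $\widetilde T$ is minimal on the nonorientable part (Proposition~\ref{propBL}) and every word of $G\cap S$ without proper prefix in $G\cap S$ is realized as a genuine return. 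A minor companion point worth stating carefully is that $\widetilde T$ indeed has no connection and that $\widehat I$ is admissible in the sense of Definition~\ref{definition1}; both follow from the fact that $f$ is an unramified local isometry off $\Sigma$ and that admissibility is a local condition on the endpoints together with the no-connection hypothesis, lifted along $f$.
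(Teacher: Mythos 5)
Your overall strategy is the paper's: suspend $T$ via Lemma~\ref{lem:existence_suspension}, invoke the Galois correspondence of Theorem~\ref{thm:Galois_correspondence} to get a degree-$d$ ramified covering $f\colon Y\to X$ with $f_*(\pi_1(Y\setminus f^{-1}(\Sigma)))=G$, lift the foliation, and identify prime words with first returns in the cover. The gap is in your final step, where you apply Lemma~\ref{lem:key_lemma} to a ``single admissible interval $\widehat I$'' obtained by ``concatenating'' the $d$ lifts of $I$. First, that object does not exist: the $d$ components of $f^{-1}(I)$ are pairwise disjoint transverse segments scattered in $Y$, and they cannot be concatenated into one segment transverse to $\widetilde{\FFF}$ without leaving $f^{-1}(I)$. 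Second, even treating $f^{-1}(I)$ as a (disconnected) transversal, the loops of $\Gamma(Y,f^{-1}(I),y_0)$ are \emph{not} carried by $f_*$ to the prime words: a leaf segment running from one crossing of $f^{-1}(I)$ to the next projects to a first-return segment of $I$ downstairs, so each such loop projects to a single letter of $A\cup A^{-1}$. The count fails as well: the return map to $f^{-1}(I)$ cuts each of the $d$ sheets into $2k$ pieces, giving $2kd$ loops, whereas a symmetric basis of $G$ must have $2(d(k-1)+1)=2kd-2d+2$ elements; for $d>1$ there are too many, reflecting the fact that the zippered-rectangle retraction of $Y\setminus f^{-1}(\Sigma)$ onto a disconnected transversal yields a graph with $d$ vertices, and one must first contract a spanning tree before reading off a basis. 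For the same reason your claim that the Poincar\'e map on $\widehat I$ is a $k'$-linear involution with $k'=d(k-1)+1$ is numerically inconsistent with the $2kd$ subintervals actually present.

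The repair is exactly what you gesture at in your last paragraph but do not carry out: apply Lemma~\ref{lem:key_lemma} to a \emph{single} component of $f^{-1}(I)$, namely the sheet containing the basepoint $y_0$ (the sheet indexed by the coset $G$ itself). That single lift is an admissible interval for $\widetilde{\FFF}$ --- admissibility is a condition at the two endpoints and transports along $f$, a local homeomorphism off $\Sigma$ --- and the first returns of the lifted leaves to that one sheet are precisely the orbits coded by prime words with respect to $G$; your points (i)--(iii) are the right verification of this. Lemma~\ref{lem:key_lemma} applied to this single interval then gives that the corresponding set of loops is a symmetric basis of $\pi_1(Y\setminus f^{-1}(\Sigma),y_0)\cong G$, which is the desired conclusion. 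This is precisely the paper's route: it writes the lifted Poincar\'e map as $\widetilde T(x,Gw)=(Tx,Gwa(x))$ on $\hat I\times Q$, with $Q$ the set of right cosets of $G$, and then induces on the single sheet $\hat I\times\{G\}$ before invoking Lemma~\ref{lem:key_lemma}.
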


\begin{proofof}{of Theorem~\ref{theoremGroupCode}}
Let $T:\hat{I}\rightarrow \hat{I}$ be a   linear involution on 
the alphabet $A$ without connection. By Lemma~\ref{lem:existence_suspension},
there exists a measured foliation $(X,\Sigma,\FFF,\mu)$ and an admissible interval $I\subset X$ such that $T$ is conjugate to the Poincar\'e map of $\FFF$ on $I$.  By Lemma~\ref{lem:key_lemma},
there is an identification $F_A\rightarrow \pi_1(X\setminus\Sigma,x_0)$
for any $x_0\in I$.

Let $G$ be a subgroup of $F_A$ of index $d$.
By Theorem~\ref{thm:Galois_correspondence}, there is a covering 
$f:\tilde{X}\rightarrow X$
of degree $d$ ramified over $\Sigma$ such that $G$ is identified with
$\pi_1(\tilde{X}\setminus f^{-1}(\Sigma))$, i.e., 
 $f_*(\pi_1(\tilde{X}\setminus\Sigma))=G$.

The preimage $\tilde{I}$ of the interval $I$ in $\widetilde{X}$ is made of $d$ copies of $I$.
We can also lift the measured foliation to $\widetilde{X}$ and describe the Poincar\'e map
of this measure foliation on $\widetilde{I}$.
Indeed, let $\widetilde{I} = \hat{I}  \times Q$ where $Q$ is the set of right cosets of $G$ in $F_A$.
For a point $x \in \hat{I}$ we denote by $a(x)$ the element of $A \cup A^{-1}$ 
such that $x \in I_{a(x)}$.
We define
\[
\widetilde{T}(x, Gw) = (Tx, G w a(x)).
\]
Then $\widetilde{T}$ is the Poincar\'e map  of the lift of $(\FFF,\mu)$ to $\widetilde{X}$ on $\widetilde{I}$.

Now, consider the induced map of $\widetilde{T}$ on the interval $\hat{I} \times \{G\}$
where $\{G\}$ denotes the set reduced to the coset $G$.
For a point $x \in \hat{I}$ we denote by $\rho(x)$ the least $n \geq 1$ 
such that $\widetilde{T}^n(x,G) \in \hat{I} \times \{G\}$.

 The natural coding of a finite orbit $\{x,T(x),\ldots,T^{n-1}(x)\}$
is defined as  the word  $\Sigma_T^{(n)}(x)$ $=a_0a_1\cdots a_{n-1}$ such that $T^i(x)\in I_{a_i}$ for $0\le i<n$. Thus it is
is the prefix of length $n$ of the infinite natural coding $\Sigma_T(x)$ of $T$ relative to $x$. 

We fix a basepoint $\tilde{x}_0$ in $\tilde{X}$ and
for a point $x\in \hat{I}$, we denote by $\tilde{\gamma}(x)$ the loop from $\tilde{x}_0$  to itself which
passes by $x,T(x),\cdots,T^{\rho(x)-1}(x)$ as in the previous section.

It is easy to verify that the map $\tilde{\gamma}(x)\mapsto \Sigma_T^{\rho(x)}(x)$ for $x\in \hat{I}$
is a bijection from $\Gamma(\tilde{X}\setminus f^{-1}(\Sigma),\tilde{I},\tilde{x}_0)$ onto  the set of  prime words  with respect to  $G$ 
which extends to an isomorphism from $\pi_1(\tilde{X}\setminus\Sigma)$ onto $G$.

 By Lemma~\ref{lem:key_lemma}, the set $\Gamma(Y\setminus f^{-1}(\Sigma),\tilde{I}\times\{G\})$
is a symmetric basis of $G$.
We thus deduce that the set of prime words  with respect   to $G$ is a symmetric basis of $G$.
\end{proofof}

\begin{corollary}
Let $T$ be a linear involution without connection. Let $w$ be a word of its  natural coding  ${\mathcal L } (T)$.
The set of first right return words  to $w$  is a basis of the  even group.
\end{corollary}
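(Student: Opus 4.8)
The plan is to reduce the statement to a one-sided version of Lemma~\ref{lem:key_lemma} on the orientation covering. If $T$ is orientable, the even group is all of $F_A$ and the claim follows from Theorem~\ref{theoremReturns}: inverting complete first return words to $\{w\}$ gives complete first return words to $\{w^{-1}\}$, so the set of first left return words to $w^{-1}$ is $\RR_S(w)^{-1}$, and combined with the description of $\MR_S(w)$ recalled after Definition~\ref{def:mixed} this gives $\MR_S(w)=\RR_S(w)\cup\RR_S(w)^{-1}$. Since $\RR_S(w)$ and $\RR_S(w)^{-1}$ lie in the two components $S_+$ and $S_-$ of $S$ and meet only in $\varepsilon$, this union is disjoint, so $\RR_S(w)$ selects one element out of each inverse pair of the symmetric basis $\MR_S(w)$ of $F_A$, hence is a basis of $F_A$. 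From now on assume $T$ nonorientable, so $E$ has index $2$.

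First I would record, as the analogue for ordinary first right return words of Lemma~\ref{lem:return_word_code_induced_map}, the fact that $\RR_S(w)$ is exactly the set of prefixes of length $\rho_{J_0}(z)$ of the infinite natural codings of points $z\in J_0$, where $J_0:=\sigma_2(I_{w^{-1}})$, which equals $T^{|w|}(I_w)$ by Proposition~\ref{prop:inverse}. Indeed, if $wu$ is a complete first return word to $\{w\}$ and $x\in I_{wu}$, then $z=T^{|w|}(x)$ lies in $J_0$, has first return $\rho_{J_0}(z)=|u|$ and has coding $u$; conversely every $z\in J_0$ yields such a $u$. Moreover $J_0$ is admissible for $T$, being the $\sigma_2$-image of the admissible interval $I_{w^{-1}}$ (Proposition~\ref{propIwAdmissible}), since $\sigma_2 T\sigma_2=T^{-1}$ and admissibility is invariant under $T\mapsto T^{-1}$; and $J_0$ is contained in a single component of $\hat I$. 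Next, take a suspension $(X,\Sigma,\FFF,\mu)$ of $T$ (Lemma~\ref{lem:existence_suspension}) with admissible interval $I$ and the identification $F_A\cong\pi_1(X\setminus\Sigma,x_0)$ of Lemma~\ref{lem:key_lemma}, and let $f:\tilde X\to X$ be the ramified double covering associated with $E$ by Theorem~\ref{thm:Galois_correspondence}. As in Example~\ref{exampleEvenGroup} this is the orientation covering, so the lifted foliation $\tilde\FFF$ is orientable and its Poincar\'e map $\tilde T$ on $\hat{\tilde I}$ is an orientable linear involution; writing $\hat{\tilde I}=\hat I\times\{E,Ea\}$ with $a$ odd as in the proof of Theorem~\ref{theoremGroupCode}, the quantity $\delta+s$ is $\tilde T$-invariant (a point and its $T$-image lie in the same component of $\hat I$ precisely when the crossed letter is even), and its two level sets are the two $\tilde T$-invariant components of $\tilde T$. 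Hence the lift $\tilde J_0:=J_0\times\{E\}$ of $J_0$ lies inside one component $(\hat{\tilde I})_\bullet$, where it is an admissible transverse subinterval of the interval exchange $\tilde T|_{(\hat{\tilde I})_\bullet}$.

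Then I would match the two dynamics and conclude. For $z\in J_0$ the $\tilde T$-orbit above $z$ projects onto the $T$-orbit of $z$, and whenever $T^nz$ returns to $J_0$ the component coordinate, hence also (by invariance of $\delta+s$) the sheet coordinate, is automatically back to its starting value; so the first return of $\tilde J_0$ to itself under $\tilde T|_{(\hat{\tilde I})_\bullet}$ occurs at the first $T$-return of $z$ to $J_0$, with the same coding over $A\cup A^{-1}$, which by the previous paragraph is the corresponding first right return word to $w$. Thus $\RR_S(w)$ is exactly the set of labels of the first-return loops based in $\tilde J_0$, and through $\pi_1(\tilde X\setminus f^{-1}(\Sigma))\hookrightarrow\pi_1(X\setminus\Sigma)=F_A$ each such loop represents exactly its label (inserting and deleting back-and-forth segments to the basepoint, as in the proof of Theorem~\ref{theoremReturns}). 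Since $\tilde J_0$ is an admissible transverse interval of the \emph{orientable} Poincar\'e map $\tilde T|_{(\hat{\tilde I})_\bullet}$, the fact that return words to an admissible transverse interval of an interval exchange form a \emph{basis} of the fundamental group of its suspension surface (the orientable counterpart of Lemma~\ref{lem:key_lemma}, see also~\cite{BertheDeFeliceDolceLeroyPerrinReutenauerRindone2013a,DolcePerrin2015}) shows that these loops form a basis, not merely a symmetric basis, of $\pi_1(\tilde X\setminus f^{-1}(\Sigma))=E$; hence $\RR_S(w)$ is a basis of $E$. The delicate points I expect are isolating the interval $J_0=\sigma_2(I_{w^{-1}})$ and checking that inducing on it reproduces $\RR_S(w)$ exactly (and not the complete return words, nor the return words to $\{w,w^{-1}\}$), and the last step, namely that a one-sided transverse section of an orientable suspension yields a genuine, non-symmetric basis of the fundamental group, which is the reason the even group rather than $F_A$ appears.
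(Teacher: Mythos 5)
Your argument is correct, but it is organized quite differently from the proof in the paper. The paper stays on the base surface: it induces $T$ on the single component $I\times\{0\}$ containing $I_w$, observes that the induced map is an orientable linear involution (an interval exchange with flips) whose subintervals are coded by the prime words of the even group lying in $S_+$ --- so that, via Theorem~\ref{theoremGroupCode}, the alphabet of the induced system is already a basis of the even group --- and then invokes the orientable case, in which $\MR_S(w)$ is the disjoint union of $\RR_S(w)$ and $\RR_S(w)^{-1}$. You instead pass to the orientation double cover $f:\tilde X\to X$ attached to the even group by Theorem~\ref{thm:Galois_correspondence}, identify $\RR_S(w)$ with the first-return codings of $J_0=\sigma_2(I_{w^{-1}})=T^{|w|}(I_w)$, lift $J_0$ into one $\tilde T$-invariant component (your invariance of $\delta\oplus s$ is a clean geometric restatement of the Remark that right return words are even), and conclude with a one-sided version of Lemma~\ref{lem:key_lemma}. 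The two routes are two faces of the same reduction, since the restriction of $\tilde T$ to an invariant component is conjugate to the paper's induced map on $I\times\{0\}$; yours buys a direct identification of the even group with $\pi_1(\tilde X\setminus f^{-1}(\Sigma))$ and avoids recoding $w$ over the prime-word alphabet, while the paper's avoids any geometric input beyond what is already proved. The one ingredient you use that is genuinely not in the paper is the claim that the first-return loops of an admissible transversal of an \emph{orientable} foliation, followed in one direction only, form a basis (not merely a symmetric basis) of the fundamental group. This is true and follows from the same zippered-rectangle retraction as Lemma~\ref{lem:key_lemma} --- one rectangle per subinterval of the one-sided return map, each retracting onto one petal of a rose, with no inverse pairing between petals and no obstruction coming from flips --- but since it is precisely the point where a basis rather than a symmetric basis appears, you should write out that adaptation rather than only cite it. Your first paragraph, observing that choosing one element from each inverse pair of a symmetric basis yields a basis, is a welcome explicit justification of the orientable case, which the paper only asserts.
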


\begin{proof}
We  assume  w.l.o.g.  that $I_w \subset  I \times \{0\}$.
 We consider the induced  map of $T$ on $I \times \{0\}$.
 It is an orientable   linear involution  without connection, that is,   an interval exchange with  flip(s), 
with intervals provided by   the  prime words of the   even group that belong to $S_+$, with the notation of Proposition \ref{propOrientable}.
 Furthermore, in the orientable case, the set of complete   first return words  $\MR(w)$   is made of the   first right return words  to $w$   
with the  first left return word to $w^{-1}$. The conclusion comes from the fact that  prime words  of the even group  that  are in $  S_+$ are the    first right return words  to $w$.
\end{proof}

We illustrate Theorem~\ref{theoremGroupCode} with the following 
interesting example.
\begin{example}\label{ex:returnbis}
Let $T$ be as in Example~\ref{exampleInvolution3}
and let $S={\mathcal L } (T)$. Let $G$ be the group of even words in $F_A$.
It is a subgroup of index $2$. The set of  prime words with respect  to $G$ in $S$
is the set $Y=X\cup X^{-1}$ with
\begin{displaymath}
X=\{a,ba^{-1}c,bc^{-1},b^{-1}c^{-1},b^{-1}c\}.
\end{displaymath}
Actually, the transformation induced by $T$ on the set $I\times \{0\}$
(the upper part of $\hat{I}$ in Figure~\ref{figureLinear3}) is the
interval echange transformation represented in Figure~\ref{figureRetourHaut}.
Its upper intervals are the $I_x$ for $x\in X$.
\begin{figure}[hbt]
\centering
\gasset{AHnb=0,Nadjust=wh}
\begin{picture}(100,20)
\node(h0)(0,10){}\node(babarc)(23.6,10){}\node(bcbar)(47.2,10){}
\node(bbarcbar)(61.8,10){}\node(bbarc)(85.4,10){}\node(h1)(100,10){}
\node(b0)(0,0){}\node(cb)(14.1,0){}\node(cbarb)(38.2,0){}\node(cbarabbar)(52.6,0){}
\node(abar)(76.4,0){}\node(b1)(100,0){}

\drawedge(h0,babarc){$a$}\drawedge(babarc,bcbar){$ba^{-1}c$}
\drawedge(bcbar,bbarcbar){$bc^{-1}$}\drawedge(bbarcbar,bbarc){$b^{-1}c^{-1}$}
\drawedge(bbarc,h1){$b^{-1}c$}
\drawedge(b0,cb){$cb^{-1}$}\drawedge(cb,cbarb){$cb$}\drawedge(cbarb,cbarabbar){$c^{-1}b$}
\drawedge(cbarabbar,abar){$c^{-1}ab^{-1}$}
\drawedge(abar,b1){$a^{-1}$}
\end{picture}
\caption{The transformation induced on the upper level.}\label{figureRetourHaut}
\end{figure}
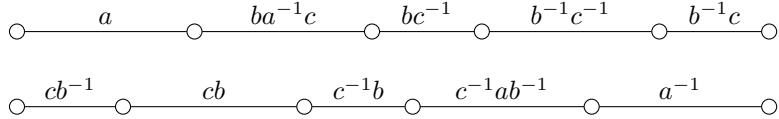
This corresponds to the fact that the words of $X$ correspond to
the first returns to $I\times\{0\}$ while the words of $X^{-1}$ correspond
to the first returns to $I\times\{1\}$.

Furthermore, one  may check directly  that the set  $X=\{a,ba^{-1}c,bc^{-1},b^{-1}c^{-1},b^{-1}c\}$ is a basis of a  subgroup of index $2$, in agreement with Theorem \ref{theoremGroupCode}. 
\end{example}

\bibliographystyle{plain}
\bibliography{involutions}

\begin{thebibliography}{10}

\bibitem{BerstelDeFelicePerrinReutenauerRindone2012}
Jean Berstel, Clelia De~Felice, Dominique Perrin, Christophe Reutenauer, and
  Giuseppina Rindone.
\newblock Bifix codes and {S}turmian words.
\newblock {\em J. Algebra}, 369:146--202, 2012.

\bibitem{Specular2015}
Val\'erie Berth\'e, Clelia De~Felice, Vincent Delecroix, Francesco Dolce,
  Dominique Perrin, Julien Leroy, Christophe Reutenauer, and Giuseppina
  Rindone.
\newblock Specular sets.
\newblock {\em Preprint}, 2015.

\bibitem{BertheDeFeliceDolceLeroyPerrinReutenauerRindone2013c}
Val\'erie Berth\'e, Clelia De~Felice, Francesco Dolce, Julien Leroy, Dominique
  Perrin, Christophe Reutenauer, and Giuseppina Rindone.
\newblock Bifix codes and interval exchanges.
\newblock {\em Journal of Pure and Applied Algebra}, 219:2781--2798, 2015.

\bibitem{BertheDeFeliceDolceLeroyPerrinReutenauerRindone2013b}
Val\'erie Berth\'e, Clelia De~Felice, Francesco Dolce, Julien Leroy, Dominique
  Perrin, Christophe Reutenauer, and Giuseppina Rindone.
\newblock The finite index basis property.
\newblock {\em Journal of Pure and Applied Algebra}, 219:2521--2537, 2015.

\bibitem{BertheDeFeliceDolceLeroyPerrinReutenauerRindone2013m}
Val\'erie Berth\'e, Clelia De~Felice, Francesco Dolce, Julien Leroy, Dominique
  Perrin, Christophe Reutenauer, and Giuseppina Rindone.
\newblock Maximal bifix decoding.
\newblock {\em Discrete Math.}, 338:725--742, 2015.

\bibitem{BertheDeFeliceDolceLeroyPerrinReutenauerRindone2013a}
Val\'erie Berth\'e, Clelia De~Felice, Francesco Dolce, Dominique Perrin, Julien
  Leroy, Christophe Reutenauer, and Giuseppina Rindone.
\newblock Acyclic, connected and tree sets.
\newblock {\em Monatsh. Math.}, 176:521--550, 2015.

\bibitem{BertheRigo2010}
Val{\'e}rie Berth{\'e} and Michel Rigo.
\newblock {\em Combinatorics, automata and number theory}, volume 135 of {\em
  Encyclopedia Math. Appl.}
\newblock Cambridge Univ. Press, Cambridge, 2010.

\bibitem{BoissyLanneau2009}
Corentin Boissy and Erwan Lanneau.
\newblock Dynamics and geometry of the {R}auzy-{V}eech induction for quadratic
  differentials.
\newblock {\em Ergodic Theory Dynam. Systems}, 29(3):767--816, 2009.

\bibitem{BS:2014}
Alexander~I. Bufetov and Boris Solomyak.
\newblock On the modulus of continuity for spectral measures in substitution
  dynamics.
\newblock {\em Adv. Math.}, 260:84--129, 2014.

\bibitem{HilionCoulboisLustig2008}
Thierry Coulbois, Arnaud Hilion, and Martin Lustig.
\newblock {$\Bbb R$}-trees and laminations for free groups. {I}. {A}lgebraic
  laminations.
\newblock {\em J. Lond. Math. Soc. (2)}, 78(3):723--736, 2008.

\bibitem{DanthonyNogueira1988}
Claude Danthony and Arnaldo Nogueira.
\newblock Involutions lin\'eaires et feuilletages mesur\'es.
\newblock {\em C. R. Acad. Sci. Paris S\'er. I Math.}, 307(8):409--412, 1988.

\bibitem{DanthonyNogueira1990}
Claude Danthony and Arnaldo Nogueira.
\newblock Measured foliations on nonorientable surfaces.
\newblock {\em Ann. Sci. \'Ecole Norm. Sup. (4)}, 23(3):469--494, 1990.

\bibitem{DolcePerrin2015}
Francesco Dolce and Dominique Perrin.
\newblock Interval exchanges, admissibility and branching {R}auzy induction.
\newblock {\em Preprint}, 2015.

\bibitem{Durand:98}
Fabien Durand.
\newblock A characterization of substitutive sequences using return words.
\newblock {\em Discrete Math.}, 179(1-3):89--101, 1998.

\bibitem{Durand:00}
Fabien Durand.
\newblock Linearly recurrent subshifts have a finite number of non-periodic
  subshift factors.
\newblock {\em Ergodic Theory Dynam. Systems}, 20(4):1061--1078, 2000.

\bibitem{Durand:03}
Fabien Durand.
\newblock Corrigendum and addendum to: ``{L}inearly recurrent subshifts have a
  finite number of non-periodic subshift factors'' [{E}rgodic {T}heory {D}ynam.
  {S}ystems {\bf 20} (2000), no. 4, 1061--1078; {MR}1779393 (2001m:37022)].
\newblock {\em Ergodic Theory Dynam. Systems}, 23(2):663--669, 2003.

\bibitem{FathiLaudenbachPoenaru1991}
Albert Fathi, Fran\c{c}ois Laudenbach, and Valentin Po\'enaru.
\newblock {\em Travaux de Thurston sur les surfaces}, volume 66--67 of {\em
  Ast\'erisque}.
\newblock Soci\'et\'e Math\'ematique de France, 1991.
\newblock Seconde \'edition (premi\`ere \'edition 1979).

\bibitem{Forster1991}
Otto Forster.
\newblock {\em Lectures on {R}iemann surfaces}, volume~81 of {\em Graduate
  Texts in Mathematics}.
\newblock Springer-Verlag, New York, 1991.
\newblock Translated from the 1977 German original by Bruce Gilligan, Reprint
  of the 1981 English translation.

\bibitem{GaboriauLevittPaulin1995}
Damien Gaboriau, Gilbert Levitt, and Fr{\'e}d{\'e}ric Paulin.
\newblock Pseudogroups of isometries of {$\bold R$}: reconstruction of free
  actions on {$\bold R$}-trees.
\newblock {\em Ergodic Theory Dynam. Systems}, 15(4):633--652, 1995.

\bibitem{Hatcher2002}
Allen Hatcher.
\newblock {\em Algebraic topology}.
\newblock Cambridge University Press, Cambridge, 2002.

\bibitem{Keane1975}
Michael Keane.
\newblock Interval exchange transformations.
\newblock {\em Math. Z.}, 141:25--31, 1975.

\bibitem{LopezNarbel2013}
Luis-Miguel Lopez and Philippe Narbel.
\newblock Lamination languages.
\newblock {\em Ergodic Theory Dynam. Systems}, 33(6):1813--1863, 2013.

\bibitem{Nogueira1989}
Arnaldo Nogueira.
\newblock Almost all interval exchange transformations with flips are
  nonergodic.
\newblock {\em Ergodic Theory Dynam. Systems}, 9(3):515--525, 1989.

\bibitem{NogueiraPiresTroubetzkoy2013}
Arnaldo Nogueira, Benito Pires, and Serge Troubetzkoy.
\newblock Orbit structure of interval exchange transformations with flip.
\newblock {\em Nonlinearity}, 26(2):525--537, 2013.

\bibitem{Rauzy1979}
G{\'e}rard Rauzy.
\newblock \'{E}changes d'intervalles et transformations induites.
\newblock {\em Acta Arith.}, 34(4):315--328, 1979.

\bibitem{Skripchenko2012}
Alexandra Skripchenko.
\newblock Symmetric interval identification systems of order three.
\newblock {\em Discrete Contin. Dyn. Syst.}, 32(2):643--656, 2012.

\end{thebibliography}
\end{document}